\newtheorem{theorem}{Theorem}[section]
\newtheorem{lemma}[theorem]{Lemma}
\newtheorem{corollary}[theorem]{Corollary}
\newtheorem{proposition}[theorem]{Proposition}
\newtheorem{remark}[theorem]{Remark}
\newtheorem{definition}{Definition}
\newtheorem*{theorem*}{Theorem}
\newtheorem*{proposition*}{Proposition}
\numberwithin{equation}{section}
\newtheoremstyle{mythm}%
{3pt}
{3pt}
{}
{}
{\bfseries}
{.}
{.5em}
{}%
\theoremstyle{mythm}
\newtheorem*{notation}{Notation}
\def\Z{\mathbb{Z}}
\def\R{\mathbb{R}}
\def\C{\mathbb{C}}
\DeclareMathOperator{\Ima}{Im}
\DeclareMathOperator{\hess}{Hess}
\DeclareMathOperator{\tr}{tr}
\def\set#1{\left\{\, #1 \,\right\}}
\def\abs #1{\left| \,#1\, \right|}
\def\norm #1{\left\| \,#1\, \right\|}
\def\inner #1#2{\langle \,#1,#2\, \rangle}
\def\cinner #1#2{\langle \,#1\,,\,#2\, \rangle_\C}
\def\cj #1{\overline{#1}}
\def\vs{\vspace{.2cm}}
\def\eval#1#2{\left. #1\right|_{#2}}
\def\calA{\mathcal{A}}
\def\calB{\mathcal{B}}
\def\calC{\mathcal{C}}
\def\calD{\mathscr{D}}
\def\calE{\mathcal{E}}
\def\calL{\mathcal{L}}
\def\calM{\mathscr{M}}
\title[Decomposition of the Jacobi equation]
{A natural decomposition of the Jacobi equation\\
for some classes of $N$-body problems}
\author{Renato Iturriaga}
\address{CIMAT -- Centro de Investigación en Matemáticas, México}
\email{renato@cimat.mx}
\author{Ezequiel Maderna}
\address{CIMAT -- Centro de Investigación en Matemáticas, México}
\email{ezequiel.maderna@cimat.mx}
\keywords{Newtonian $N$-body problem}
\subjclass[2010]{70H20 70F10 (Primary),
37J50 (Secondary)}
\date{\today}
\begin{document}

\begin{abstract}
We consider several $N$-body problems.
The main result is a very simple and natural
criterion for decoupling the Jacobi equation for some classes of them.
If $E$ is a Euclidean space, and the potential function $U(x)$
for the $N$-body problem is a $C^2$ function 
defined in an open subset of $E^N$,
then the Jacobi equation along a given motion $x(t)$
writes $\ddot J=HU_x(J)$, where the endomorphism $HU_x$ of $E^N$
represents the second derivative of the potential
with respect to the mass inner product.

Our splitting in particular applies 
to the case of homographic motions by central configurations.
It allows then to deduce the well known Meyer-Schmidt decomposition
for the linearization of the Euler-Lagrange flow in the phase space,
formulated twenty years ago to study the relative equilibria
of the planar $N$-body problem.
However, our decomposition principle applies
in many other classes of $N$-body problems,
for instance to the case of isosceles three body problem,
in which Sitnikov proved the existence of oscillatory motions.

As a first concrete application, for the classical three-body problem
we give a simple and short proof of a theorem of Y. Ou,
ensuring that if the masses verify
$\mu=(m_1+m_2+m_3)^2/(m_1m_2+m_2m_3+m_1m_3)<27/8$
then the elliptic Lagrange solutions are linearly unstable
for any value of the eccentricity.
\end{abstract}

\maketitle

\section{Introduction}

In 1843, Gabriel Gascheau \cite{Gascheau} discusses his thesis
on mechanics, showing the linear stability of Lagrange's
relative equilibria of the three-body problem
under the mass hypothesis
\[
\mu=(M+m+m')^2/(Mm+Mm'+mm')>27\,.
\]
If the three masses are equal then the value
of this constant, which today we know as Gascheau's constant,
is $\mu=3$ and the motion is unstable.
Therefore, to achieve stability in this case,
the presence of a dominant mass is required, that is,
one that is much larger than the others.
This is the first known example that supports
a conjecture of Moeckel \cite{MoeckelPCMaderna},
which states that in the $N$-body problem,
with $N\geq 3$, there is no possibility of
stable motions unless there is a dominant mass.
See \cite{Moeckel}, and problems 15 and 16 in the list
compiled by Albouy, Cabral, and Santos \cite{AlbouyCabralSantos}.

The Gascheau's theorem is an absolutely surprising result for his time.
Firstly, because it has a notable precedent, which is
the announcement one year before by Liouville \cite{Liouville}
that collinear homographic motions are always unstable,
which is why they cannot be observed in nature.
On the other hand, it is surprising that his method
allows him to establish exactly the threshold of stability
in terms of a simple function of the masses that was unknown until then.
Indeed, Gascheau shows that
there is linear stability if and only if $\mu>27$.

The required relationship between the masses
holds for the Sun-Earth-Moon system,
since the mass of the Sun is $M\simeq33.10^4\,m$,
where $m$ is the mass of the Earth,
and $m\simeq100\,m'$ where $m'$ is the mass of the Moon.
However, the motion of this system is clearly very far
from equilateral configurations.
Actually, a linearly stable behavior was not observed
until much later, when the first Trojan asteroids were
discovered at the beginning of the 20th century,
see for instance the historical notes by Robutel
and Souchay in \cite[pp. 198--201]{RobutelSouchay}.
Despite this, in his note in the Comptes Rendus
of the same year \cite{Gascheau2},
Gascheau rightly states that while its study
\emph{
may not be of interest to astronomers, it cannot be
completely unworthy of the attention of geometers,
due to its simplicity and symmetries}.

These Lagrange's relative equilibria,
in which the three masses rotate around their center of mass,
keeping their three mutual distances constant and equal,
are the special case of zero eccentricity within the family of
homographic motions with equilateral configuration,
also known as the elliptic Lagrange's solutions.
In these solutions the equilateral triangle rotates
but also expands and contracts, and each of the bodies
moves as if it were subjected to a Keplerian central force,
that is, traveling along an ellipse.
Actually their motions define an ellipse in the space
of configurations, which is contained in a subspace
of equilateral triangles.

The study of the linear stability of elliptic
Lagrange's solutions is much more recent.
The elliptical case with three positive
masses\footnote{\,The restricted problem
(i.e. the case of a zero mass)
was widely studied in the meantime.}
was first treated numerically by Danby in \cite{Danby}.
The problem was then studied by Roberts in \cite{Roberts},
managing to analytically determine a boundary
for the stability region in the
parameter space
\[
(\,\mu\,,\,e\,)\in R,\quad\quad\,R=[3,+\infty)\times[0,1),
\]
where $e$ denotes the
common eccentricity of the trajectories.
It should be noted that usually authors use
the equivalent parameter $\beta=27/\mu$
in order to have stability for $\beta\in(0,1)$
in the circular case $e=0$.
However, for eccentricity values close to $1$,
Roberts' work makes use of numerical estimates.

An important breakthrough in the study of this
problem was the discovery by Meyer and Schmidt
\cite{MeyerSchmidt} of the phase-space decomposition
of the linearization of the flow,
which is based in what they called the symplectic
coordinates associated to a given central configuration.
Using these coordinates, they recover Roberts' results
and analyze the relative equilibria associated with
other central configurations.

As we will show in section \ref{sec:applications.list},
our decomposition principle in the case of planar
homographic motions is completely equivalent to the
Meyer-Schmidt construction.
However, our decomposition emanates from a simple geometric
consideration in configuration space:
\emph{the Meyer-Schmidt decomposition turns out to be
a consequence of the uniform decomposition
for the Hessian of the Newtonian potential,
which also occurs in contexts other
than homographic motions}.

Before giving a description of the main results
of this paper, we must mention that thanks to
the Meyer-Schmidt decomposition it was possible
to put into practice in this kind of problems the
Morse index theory, as well as the
$\omega$ index theory for symplectic paths,
giving rise to a description of the stability zone
(according to the different spectral types)
in the whole rectangle in which the parameter
$(\mu,e)$ varies. These important advances
are due (for instance) to the works of Hu and Sun \cite{HuSun},
Hu, Long and Sun \cite{HuLongSun}, Hu and Ou \cite{HuOu, HuOu2},
Barutello, Jadanza and Portaluri \cite{BarutelloJadanzaPortaluri}
for the circular case with other potentials,
and see also Hu, Long and Ou \cite{HuLongOu}
for the case of a regular $n$-gon with an additional mass in the center.
However, these techniques have not allowed to obtain
analytical expressions for the curves that border
the different regions into which the parameter space
is decomposed, as they were numerically described
by Martínez, Samà and Simó in \cite{MartinezSamaSimo}
see \cite[Figure 1]{HuLongSun}.
These numeric estimations are based on analytical
information that they deduce for the cases $e>0$ small,
and $e$ close to $1$.

A result that does not depend on numerical estimates,
which is based on Maslov-type index theory,
is the following. In particular it implies
linear instability for any value of the eccentricity.

\begin{theorem*}[Ou \cite{Ou}, 2014]
If $\mu<27/8$ then the essential part of
the Meyer-Schmidt decomposition of any
elliptic Lagrangian solution is hyperbolic.
\end{theorem*}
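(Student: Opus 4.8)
The plan is to read off hyperbolicity from the positivity of the Hessian on the essential subspace, combining our decomposition principle with an elementary convexity estimate for the Jacobi equation. Let $x_0$ be the normalized equilateral central configuration, normalized so that $\nabla U(x_0)=-x_0$, and let $x(t)=r(t)R(t)x_0$ be the homographic motion, where $R(t)$ is the rotation and $r(t)$ solves the associated Kepler problem. Inside the reduced configuration space (centre of mass fixed) let $W$ be the essential subspace, i.e. the orthogonal complement, for the mass inner product, of the plane spanned by the homothety direction $x_0$ and the rotation direction $\Omega x_0$, where $\Omega$ is the infinitesimal rotation. Since $\Omega$ is skew-symmetric and preserves that plane, it preserves $W$, and $HU_{x_0}$ preserves $W$ by construction; hence the decomposition principle guarantees that the Jacobi equation $\ddot J=HU_{x(t)}J$ leaves invariant the time-dependent subbundle $R(t)W$, and along it
\[
HU_{x(t)}\big|_{R(t)W}=r(t)^{-3}\,R(t)\,\big(HU_{x_0}|_W\big)\,R(t)^{-1}.
\]
This is exactly the essential part of the Meyer--Schmidt decomposition.

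First I would compute the two eigenvalues $\sigma_\pm$ of the symmetric endomorphism $HU_{x_0}|_W$. A direct calculation for the equilateral triangle gives, with the normalization above,
\[
\sigma_\pm=\tfrac12\Big(1\pm\sqrt{\,9-\beta\,}\Big),\qquad \beta=\tfrac{27}{\mu},
\]
so that $\sigma_+>0$ always, while $\sigma_->0$ if and only if $\sqrt{9-\beta}<1$, that is if and only if $\beta>8$, equivalently $\mu<27/8$. As a consistency check, at $e=0$ the essential system becomes the gyroscopic equation $\ddot w+2\Omega\dot w=(I+HU_{x_0}|_W)w$, whose characteristic equation in $p=\nu^2$ is $p^2+p+\tfrac14\beta=0$; both roots are real and negative exactly when $\beta\le1$, which recovers Gascheau's stability threshold $\mu=27$ for the circular case.

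The heart of the argument is a convexity estimate, which I would carry out in the inertial frame and in the physical time $t$, so as to avoid the spurious centrifugal term that appears when one passes to the true anomaly. If $\mu<27/8$ then $HU_{x_0}|_W$ is positive definite, hence $HU_{x(t)}|_{R(t)W}$ is positive definite for every $t$, being a positive multiple of a conjugate of a positive operator. Thus for any nontrivial essential Jacobi field $J(t)$,
\[
\frac{d^2}{dt^2}\,\tfrac12\,\|J(t)\|^2=\|\dot J(t)\|^2+\big\langle HU_{x(t)}J(t),J(t)\big\rangle\ \ge\ 0,
\]
with equality only if $J=\dot J=0$, so that $\|J(t)\|^2$ is strictly convex. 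A convex function that is bounded on all of $\R$ is constant, while $\|J\|^2$ can be constant only if $J\equiv0$; hence the essential part admits no nontrivial solution bounded on all of $\R$. Passing to the co-rotating frame, in which the essential system is periodic and $\|J\|=\|w\|$ is preserved, any essential monodromy eigenvalue of modulus one would produce by Floquet's theorem a nontrivial bounded solution, contradicting the convexity just established. Therefore every eigenvalue of the essential monodromy lies off the unit circle, i.e. the essential part is hyperbolic.

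The main obstacle is the eigenvalue computation of the second paragraph: one must extract the precise essential eigenvalues of the equilateral configuration and locate the threshold $\mu=27/8$ as the value $\beta=8$ at which the smaller eigenvalue $\sigma_-$ changes sign. Once the positivity $HU_{x_0}|_W>0$ is pinned to $\mu<27/8$, the convexity argument is automatically uniform in the eccentricity $e\in[0,1)$, since the sign of $HU_{x(t)}|_{R(t)W}=r(t)^{-3}R(t)(HU_{x_0}|_W)R(t)^{-1}$ depends only on that of $HU_{x_0}|_W$ and not on the scalar factor $r(t)$ or the rotation $R(t)$. This independence of $e$ is precisely the reason the instability persists for every value of the eccentricity.
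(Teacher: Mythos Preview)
Your argument is correct and shares the paper's architecture: isolate the essential subspace $D$ via the splitting lemma, identify $\mu<27/8$ with positive-definiteness of $HU_{x_0}|_D$, and deduce hyperbolicity of the essential monodromy. The genuine difference is in the dynamical step. The paper proves a Rauch-type comparison result (Theorem~\ref{thm:comparison}): Jacobi fields with $J(0)=0$ dominate solutions of $\ddot z=\alpha z$, the proof passing through two Lagrangian-action lemmas; it then runs a dimension count on the stable/center/unstable splitting of the monodromy, using that $\calD_0=\{(0,V):V\in D\}$ meets neither $E^s\oplus E^c$ nor $E^c\oplus E^u$, to force $E^c=0$. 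Your convexity observation is shorter: since $HU_{x(t)}|_D>0$, one has $\tfrac{d^2}{dt^2}\|J\|^2=2\|\dot J\|^2+2\langle HU_{x(t)}J,J\rangle>0$ for every nontrivial essential Jacobi field, so $\|J\|^2$ is strictly convex on $\R$ and hence unbounded, and Floquet then excludes any unit-modulus eigenvalue. This bypasses the comparison theorem and the action lemmas entirely; in exchange, the paper's route yields an explicit exponential lower bound on the growth rate. On the algebraic side you quote the eigenvalue formula $\sigma_\pm=\tfrac12(1\pm\sqrt{9-\beta})$ as known, whereas the paper carries out the Hessian computation on $D$ from scratch (Section~6) and checks that the restricted form has positive trace always and positive determinant exactly when $\mu<27/8$; the two characterizations agree. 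A cosmetic remark: since $D=\C\,S$ is a complex line it is rotation-invariant, so your ``time-dependent subbundle $R(t)W$'' is in fact the fixed subspace $D$, and the essential Jacobi system is already $T$-periodic in the inertial frame; the detour through the co-rotating frame is unnecessary, though harmless.
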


Now we describe our main results.
First of all, it is worth making the following comment,
which is key to our point of view.
If we consider an $N$-body problem, with bodies of different masses,
which evolve in some Euclidean space $E$,
then the space $E^N$ of configurations
-- which is a mathematical tool for describing a physical phenomena --
has a natural Euclidean structure which is not the product one,
but the one defined by the mass inner product.

We will say that a linear subspace $V\subset E^N$ is invariant
for a given $N$-body problem in $E$
when, for any $x\in V$ where the potential is defined,
the configuration of accelerations $\nabla U(x)$ is also in $V$.

\begin{lemma}
\label{lemma:Jacobi.splitting}
Given an $N$-body problem in a Euclidean space $E$,
and an invariant subspace $V\subset E^N$, 
the Jacobi equation $\ddot J= HU_x(J)$ along any motion
$x(t)\in V$ splits for the orthogonal sum
$V\oplus V^\perp$.
\end{lemma}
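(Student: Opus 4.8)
The plan is to prove the lemma by showing that the self-adjoint endomorphism $HU_{x(t)}$ leaves both $V$ and $V^\perp$ invariant whenever $x(t)\in V$; once this is established, the claimed splitting of $\ddot J=HU_{x(t)}(J)$ will be immediate.

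First I would recall that, because $U$ is $C^2$, the Hessian endomorphism is exactly the derivative of the gradient map, $HU_x=D(\nabla U)_x$, where $\nabla U$ denotes the gradient with respect to the mass inner product; moreover $HU_x$ is self-adjoint for that inner product, since the bilinear form $D^2U_x$ is symmetric. The central step is to deduce from the invariance hypothesis that $HU_x(V)\subset V$ for every $x\in V$ in the domain $\Omega$ of $U$. The invariance condition only controls $\nabla U$ along $V$, so the idea is to differentiate in directions tangent to $V$: given $v\in V$, the segment $x+sv$ lies in $V$ for small $s$, hence $\nabla U(x+sv)\in V$ by invariance, and therefore
\[
HU_x(v)=\frac{d}{ds}\Big|_{s=0}\nabla U(x+sv)\in V,
\]
since $V$ is a (closed) linear subspace. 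Equivalently, the restriction of $HU_x$ to $V$ is just the derivative of the map $\nabla U|_{V}\colon V\cap\Omega\to V$.

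The invariance of $V^\perp$ then follows formally from self-adjointness: for $w\in V^\perp$ and any $v\in V$ one has $\langle HU_x(w),v\rangle=\langle w,HU_x(v)\rangle=0$, because $HU_x(v)\in V$; hence $HU_x(w)\in V^\perp$. Finally, writing $J=J_V+J_\perp$ with $J_V\in V$ and $J_\perp\in V^\perp$, and using that the orthogonal projections onto these constant subspaces commute with differentiation in $t$, the equation $\ddot J=HU_{x(t)}(J)$ separates, by uniqueness of the decomposition $E^N=V\oplus V^\perp$, into the two decoupled systems $\ddot J_V=HU_{x(t)}(J_V)$ in $V$ and $\ddot J_\perp=HU_{x(t)}(J_\perp)$ in $V^\perp$, which is precisely the asserted splitting.

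The only genuinely delicate point is the passage from the first-order invariance hypothesis to the second-order statement $HU_x(V)\subset V$; the resolution is the observation that computing $HU_x(v)$ for $v\in V$ requires only the directional derivative of $\nabla U$ along $v$, which never leaves $V$. Everything else is a formal consequence of the self-adjointness of the Hessian with respect to the mass inner product.
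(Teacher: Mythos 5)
Your proposal is correct and follows essentially the same route as the paper's own proof: differentiate $\nabla U$ along segments $x+sv$ inside $V$ to get $HU_x(V)\subset V$, invoke self-adjointness of $HU_x$ for the mass inner product to get $HU_x(V^\perp)\subset V^\perp$, and then split the Jacobi equation by uniqueness of the decomposition $E^N=V\oplus V^\perp$. Nothing is missing.
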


Note that $V^\perp$ does not have to be an invariant
subspace for the problem; in fact, in general it is not,
as we will see in the examples.
The lemma states that both $V$ and $V^\perp$
are invariant for $HU_x$, and this for each $x\in V$.
It turns out that for any plane central configuration $x_0$,
the space $S_{x_0}$ of positively similar configurations
is an invariant subspace. 

\begin{definition}
\label{def:cc.strong.ND}
A planar central configuration $x_0$
is strongly non-degenerate
if the restriction of $HU_{x_0}$ to $S_{x_0}^\perp$
is positive definite.
\end{definition}

We will prove the following sufficient condition for
linear instability.

\begin{theorem}
\label{thm:StrongND->unstable}
If $x_0$ is a strongly non-degenerate central configuration,
then any homographic elliptic motion by $x_0$ is
linear unstable.
\end{theorem}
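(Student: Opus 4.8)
The plan is to use Lemma~\ref{lemma:Jacobi.splitting} to reduce the Jacobi equation to its restriction to the essential subspace $S_{x_0}^\perp$, and then to produce there a Jacobi field that grows exponentially. The whole argument rests on two symmetries of an $N$-body potential: invariance under the diagonal action of the rotations of $E$, and homogeneity of degree $-\alpha$ (with $\alpha=1$ in the Newtonian case). Differentiating $U(R_\theta x)=U(x)$ and $U(\lambda x)=\lambda^{-\alpha}U(x)$ twice with respect to $x$ yields the equivariance $HU_{R_\theta x}=R_\theta\,HU_x\,R_\theta^{-1}$ and the scaling $HU_{\lambda x}=\lambda^{-\alpha-2}HU_x$. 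Writing a planar homographic motion by $x_0$ as $x(t)=\lambda(t)R_{\theta(t)}x_0$, where $\lambda(t)e^{i\theta(t)}$ traces a Keplerian ellipse, these combine into $HU_{x(t)}=\lambda(t)^{-\alpha-2}\,R_{\theta(t)}\,HU_{x_0}\,R_{\theta(t)}^{-1}$.

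Since $x(t)\in S_{x_0}$ for every $t$, Lemma~\ref{lemma:Jacobi.splitting} guarantees that $S_{x_0}^\perp$ is invariant for each $HU_{x(t)}$, so the Jacobi equation admits the closed subsystem $\ddot J=A(t)\,J$ with $J\in S_{x_0}^\perp$ and $A(t)=HU_{x(t)}\big|_{S_{x_0}^\perp}$. Because the diagonal rotations $R_\theta$ are isometries of the mass inner product that preserve $S_{x_0}$, and hence $S_{x_0}^\perp$, the operator $A(t)$ is the composition of the positive scalar $\lambda(t)^{-\alpha-2}$ with an orthogonal conjugate of $HU_{x_0}\big|_{S_{x_0}^\perp}$. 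Conjugation by an isometry preserves both self-adjointness and the spectrum, and an elliptic motion keeps $\lambda(t)$ inside a compact subinterval of $(0,\infty)$; so the strong non-degeneracy hypothesis of Definition~\ref{def:cc.strong.ND} upgrades to a \emph{uniform} bound: there is $c>0$ with $\inner{A(t)J}{J}\geq c\norm{J}^2$ for all $t$ and all $J\in S_{x_0}^\perp$.

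Next I would isolate the elementary instability mechanism. For $\ddot J=A(t)J$ with $A(t)$ self-adjoint and $\inner{A(t)J}{J}\geq c\norm{J}^2$, the function $P(t)=\norm{J(t)}^2$ satisfies $\ddot P=2\norm{\dot J}^2+2\inner{A(t)J}{J}\geq 2cP$. Choosing initial data with $J(0)\neq 0$ and $\dot J(0)=\kappa\,J(0)$ for $\kappa$ large enough that $\dot P(0)\geq\sqrt{2c}\,P(0)$, the quantity $\dot P-\sqrt{2c}\,P$ obeys a first-order differential inequality that keeps it nonnegative, forcing $P(t)\geq P(0)\,e^{\sqrt{2c}\,t}$. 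This exhibits an unbounded solution of the essential block; since the Jacobi equation decouples along $S_{x_0}\oplus S_{x_0}^\perp$, the same field is an unbounded Jacobi field of the full linearized flow, which is therefore linearly unstable.

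The hard part is not any single computation but the transition in the second paragraph: turning positive-definiteness at the single configuration $x_0$ into a uniform lower bound along the entire motion. This is exactly where the three ingredients must act in concert -- rotation-equivariance removes $\theta(t)$ without touching the spectrum, homogeneity factors out the scalar $\lambda(t)^{-\alpha-2}$, and ellipticity ($e<1$) confines $\lambda(t)$ away from $0$ and $\infty$ so that this scalar, and hence $c$, stays strictly positive. If one prefers a spectral statement of instability, the same uniform bound feeds a standard comparison argument showing that the monodromy of the essential block has an eigenvalue off the unit circle.
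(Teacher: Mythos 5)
Your proposal is correct, and its first half coincides with the paper's own argument: the reduction to the block $D=S_{x_0}^\perp$ via the splitting lemma, and the uniform bound $\inner{A(t)J}{J}\geq c\,\norm{J}^2$ obtained from homogeneity, rotation equivariance and the boundedness of the elliptic factor, are exactly the content of Lemma \ref{lemma:homogenityHU} and Lemma \ref{lema:HUx(t)>alpha.x^2} (your identity $HU_{x(t)}=\lambda(t)^{-\alpha-2}R_{\theta(t)}HU_{x_0}R_{\theta(t)}^{-1}$ is Lemma \ref{lemma:homogenityHU} written multiplicatively, and your constant $c$ is their $k^{-3}\mu$). Where you genuinely diverge is the instability mechanism. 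The paper proves a Rauch-type comparison theorem (Theorem \ref{thm:comparison}) via action minimization (Lemmas \ref{lema:z.minimiza} and \ref{lema:derivada.norm.cua}), applies it to all solutions with $J(0)=0$ to get exponential growth in both time directions, and then runs a dimension count on the generalized eigenspaces of the monodromy to conclude that the whole essential block is hyperbolic, deducing instability afterwards through a Poincar\'e section. You instead run a short convexity--Gronwall argument on $P(t)=\norm{J(t)}^2$: since $\ddot P\geq 2cP$ and initial data can be chosen with $\dot P(0)\geq\sqrt{2c}\,P(0)$, one solution grows like $e^{\sqrt{2c}\,t}$, and this single unbounded Jacobi field already gives linear instability; it even forces the monodromy restricted to $D\times D$ to have spectral radius greater than one, by Gelfand's formula. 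Your route is shorter and more elementary, and fully proves Theorem \ref{thm:StrongND->unstable} as stated. What it does not give---and what the paper's heavier machinery buys---is hyperbolicity of the \emph{entire} essential part of the Meyer--Schmidt decomposition: one growing solution cannot exclude eigenvalues on the unit circle in the remaining directions of $D\times D$, and that stronger conclusion is precisely what the authors need in order to recover Ou's theorem and Hu--Ou's Theorem 1.4 without index theory. So your final remark should be read with care: the uniform bound does feed a comparison argument yielding an eigenvalue off the circle (your own estimate already does that), but full hyperbolicity requires the paper's two-sided growth for the Lagrangian subspace of solutions vanishing at $t=0$, plus the dimension count.
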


In order to apply the theorem to the planar three-body problem,
we compute the orthogonal to the equilateral configurations,
and we obtain the following result,
which combined with the previous one gives a new proof
of Ou's theorem above.

\begin{theorem}
\label{thm:LagrangeisSND.iff.mu<27/8}
For the classical three-body problem, the equilateral
configuration is strongly non-degenerate if and only
if the Gascheau constant satisfies $\mu<27/8$.
\end{theorem}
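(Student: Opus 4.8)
The plan is to reduce the claim to an explicit computation of the restriction $HU_{x_0}|_{S_{x_0}^\perp}$, which by Lemma \ref{lemma:Jacobi.splitting} is a well-defined self-adjoint endomorphism of the real two-plane $S_{x_0}^\perp$. Working in the plane $E=\C$, a configuration is a triple $z=(z_1,z_2,z_3)\in\C^3$ and the mass inner product is the real part of the Hermitian form $h(z,w)=\sum_i m_i\,\cj{z_i}\,w_i$. Fixing the center of mass at the origin cuts out the four-real-dimensional space $\{\sum_i m_i z_i=0\}$, inside which $S_{x_0}=\C z^0$ is a complex line. A short computation shows that the real-orthogonal complement of $\C z^0$ coincides with its Hermitian-orthogonal complement $\{w:\sum_i m_i w_i=0,\ h(z^0,w)=0\}$, again a complex line; so $S_{x_0}^\perp$ is spanned over $\R$ by a single vector $v$ together with $iv$.

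First I would normalize the equilateral configuration $z^0$ so that all mutual distances equal $1$ and solve the two linear conditions above for an explicit generator $v$ of $S_{x_0}^\perp$, whose coefficients depend rationally on the masses. Next, using the pairwise form of the Euclidean Hessian --- for each pair $(i,j)$ the block is $m_im_j\,(3\,\hat e_{ij}\otimes\hat e_{ij}-I)$, where $\hat e_{ij}$ is the unit vector from $z^0_j$ to $z^0_i$ --- I would evaluate the quadratic form $\langle HU_{x_0}w,w\rangle=\sum_{i<j}m_im_j\big(3\langle\hat e_{ij},w_i-w_j\rangle^2-|w_i-w_j|^2\big)$ on $w=v$ and $w=iv$, together with the mixed term, to obtain the $2\times2$ symmetric matrix $A$ of $HU_{x_0}|_{S_{x_0}^\perp}$ in the basis $\{v,iv\}$.

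The decisive step is to simplify $\tr A$ and $\det A$ and read off the threshold. I expect the trace to reduce to a positive multiple of the central-configuration constant $\lambda>0$ (determined by $\nabla U(z^0)=-\lambda z^0$), hence to be positive for all admissible masses, while the determinant factors as a positive constant times $(27/\mu-8)$, with $\mu=M^2/\sigma$, $M=\sum_i m_i$, $\sigma=\sum_{i<j}m_im_j$. Since a symmetric $2\times2$ matrix with positive trace is positive definite exactly when its determinant is positive, this gives $HU_{x_0}|_{S_{x_0}^\perp}>0\iff\det A>0\iff\mu<27/8$, which is the assertion. Equivalently, the two eigenvalues come out (up to the overall normalization) as $\tfrac{\lambda}{2}\big(1\pm 3\sqrt{1-3\sigma/M^2}\big)$ --- the radicand being nonnegative because $M^2\geq 3\sigma$ always --- and the smaller one changes sign precisely at $\mu=27/8$.

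The main obstacle I anticipate is purely computational: because the three masses are unequal the equilateral configuration is not symmetric about its center of mass, so both the generator $v$ and the evaluation of the tidal quadratic form carry mass-dependent weights that do not obviously collapse. The crux is to check that, after substitution, every mass-dependence in $\det A$ organizes itself into the single symmetric combination $\mu=M^2/\sigma$ and produces the clean factor $(27/\mu-8)$; verifying this factorization, rather than any conceptual difficulty, is where the work lies. A convenient cross-check along the way is the equal-mass case $\mu=3$, where $z^0$ is symmetric, the radicand vanishes, both eigenvalues equal $\lambda/2>0$, and strong non-degeneracy indeed holds since $3<27/8$.
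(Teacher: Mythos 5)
Your proposal is correct and follows essentially the same route as the paper: identify $D=S_{x_0}^\perp$ as a complex line inside the centered configurations, compute the $2\times 2$ symmetric matrix of $HU_{x_0}|_D$ in a basis $\{v,iv\}$, and conclude positive definiteness from positive trace together with the sign of the determinant, which changes exactly at $\mu=27/8$. The only difference is computational bookkeeping --- the paper assembles the full $6\times 6$ matrix $HU_T$ and exhibits the explicit generator $S=(m_2m_3,\,m_1m_3\,\omega^2,\,m_1m_2\,\omega)$ of $D$, whereas you evaluate the pairwise tidal form directly --- and your predicted outcomes (trace a positive multiple of $\lambda$, determinant a positive multiple of $27/\mu-8$, eigenvalues $\tfrac{\lambda}{2}\bigl(1\pm 3\sqrt{1-3\sigma/M^2}\bigr)$) all agree with the paper's computation.
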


Furthermore, we will also show in
Section \ref{sec:definitions.StrongND.equivalent}
that the above definition of strongly non-degenerate
central configuration is equivalent to the definition
given by Hu and Ou in \cite[p. 807]{HuOu}.
Since in the proof of Theorem \ref{thm:StrongND->unstable}
we establish the hyperbolicity of the essential part
of the linearization of the flow,
it follows that our result is in fact an alternative
proof, without using index theory,
of Theorem $1.4$ in the cited work, that we quote verbatim.
In its statement $D^2(U|_\calE)$ denotes the Hessian
of the restriction of the potential $U$ to the unit sphere
$\calE =\set{x\in E^N\mid \norm{x}=1}$.

\begin{theorem*}[Hu and Ou \cite{HuOu}, 2016]
If a planar central configuration $a_0$ is a strong minimizer,
that is, such that all the nontrivial eigenvalues of $D^2(U|_\calE)(a_0)$
are bigger than $U(a_0)$, then the essential part of the
Meyer-Schmidt decomposition of any elliptic Lagrangian solution
by $a_0$ is hyperbolic.
\end{theorem*}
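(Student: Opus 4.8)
The plan is to deduce this statement from Theorem \ref{thm:StrongND->unstable} together with the equivalence (promised in Section \ref{sec:definitions.StrongND.equivalent}) between the Hu--Ou notion of \emph{strong minimizer} and our notion of \emph{strongly non-degenerate} central configuration. Once this equivalence is in hand, there is nothing left to do: a strong minimizer is strongly non-degenerate, and Theorem \ref{thm:StrongND->unstable}, whose proof exhibits the hyperbolicity of the essential part of the Meyer--Schmidt decomposition, gives the conclusion verbatim. Thus the whole argument reduces to a single linear-algebra identity relating the intrinsic Hessian $D^2(U|_\calE)(a_0)$ on the mass-unit sphere $\calE$ to the ambient endomorphism $HU_{a_0}$.

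First I would record that a planar central configuration $a_0$ is exactly a critical point of $U|_\calE$, so $\nabla U(a_0)=\lambda\,a_0$ for a Lagrange multiplier $\lambda$. Taking the mass inner product with $a_0$ and invoking that $U$ is homogeneous of degree $-1$ (Euler's identity $\inner{\nabla U(a_0)}{a_0}=-U(a_0)$, with $\norm{a_0}=1$) pins down $\lambda=-U(a_0)$. I would then apply the standard formula for the Hessian of a function restricted to a sphere at a critical point: for $v,w$ tangent to $\calE$ at $a_0$, that is $v,w\in a_0^\perp$,
\[
D^2(U|_\calE)(a_0)(v,w)=\inner{HU_{a_0}v}{w}-\lambda\inner{v}{w}=\inner{HU_{a_0}v}{w}+U(a_0)\inner{v}{w},
\]
where every object is read with respect to the mass inner product, since $\calE$ is the mass-unit sphere.

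Next I would identify the trivial directions. Writing $E\cong\C$ and letting $ia_0$ denote the infinitesimal rotation of $a_0$, the space of positively similar configurations is $S_{a_0}=\mathrm{span}_\R\{a_0,ia_0\}$; here $a_0$ is the scaling direction, normal to $\calE$, while $ia_0\in a_0^\perp$ is the rotation direction, tangent to $\calE$. As $U$ is rotation invariant and $a_0$ is a critical point, $D^2(U|_\calE)(a_0)$ annihilates the orbit direction $ia_0$, the null (trivial) eigenvector with eigenvalue $0<U(a_0)$. Its orthogonal complement inside $a_0^\perp$ is precisely $S_{a_0}^\perp$, so the nontrivial eigenvalues of $D^2(U|_\calE)(a_0)$ are those of its restriction to $S_{a_0}^\perp$. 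Since $S_{a_0}$ is invariant for the problem and $a_0\in S_{a_0}$, Lemma \ref{lemma:Jacobi.splitting} guarantees that $S_{a_0}^\perp$ is $HU_{a_0}$-invariant, so the displayed formula is an honest identity of self-adjoint endomorphisms on $S_{a_0}^\perp$, namely $D^2(U|_\calE)(a_0)=HU_{a_0}+U(a_0)\,\mathrm{Id}$. Hence every nontrivial eigenvalue exceeds $U(a_0)$ if and only if $HU_{a_0}$ is positive definite on $S_{a_0}^\perp$; that is, $a_0$ is a strong minimizer in the sense of Hu and Ou exactly when it is strongly non-degenerate in the sense of Definition \ref{def:cc.strong.ND}. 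Applying Theorem \ref{thm:StrongND->unstable} then finishes the proof.

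The main obstacle I anticipate is bookkeeping rather than conceptual. One must carry the mass inner product consistently through the restriction to $\calE$, get the sign of the Lagrange multiplier and of the second fundamental form of $\calE$ right (this is where an error would most easily creep in), and confirm that after fixing the center of mass at the origin the rotation generator $ia_0$ is the only symmetry direction tangent to $\calE$, so that \emph{nontrivial} coincides with \emph{on} $S_{a_0}^\perp$. The invariance of $S_{a_0}^\perp$ under $HU_{a_0}$ supplied by Lemma \ref{lemma:Jacobi.splitting} is what makes the comparison of eigenvalues legitimate, and should be quoted explicitly rather than taken for granted.
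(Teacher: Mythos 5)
Your proposal is correct and follows essentially the same route as the paper: combine Theorem \ref{thm:StrongND->unstable} with the equivalence of strong minimization and strong non-degeneracy proved in Section \ref{sec:definitions.StrongND.equivalent}, which rests on the identity $D^2(U|_S)_a(v)=HU_a(v)+U(a)\,v$ on the tangent space at the critical configuration. The paper derives this identity by differentiating the extension $Z(x)=\nabla U(x)+U(x)\,x$ of $\nabla(U|_S)$, whereas you invoke the standard Lagrange-multiplier/second-fundamental-form formula for the Hessian of a restriction at a critical point --- the same computation in different form --- and you correctly flag that one must work with centered configurations so that the rotation direction $i\,a_0$ is the only trivial kernel direction.
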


\section{The mass inner product and the Hessian endomorphism}

Although almost everything we will prove here remains valid for
a wide range of interaction potentials, in order to fix ideas
let us start by considering the case of homogeneous potentials.
Let $E$ be a Euclidean space,
and let $m_1,\dots, m_N>0$ be the masses of $N$
punctual bodies, which evolve in the space $E$
under the mutual attraction determined by the
homogeneous potential
\begin{equation}\label{eq:U.homogeneous}
U(x)=\;\sum_{i<j}\;m_im_j\,r_{ij}^{-\kappa}.    
\end{equation}
Here $x=(r_1,\dots,r_N)\in E^N$ is the configuration
defined by the positions of the bodies,
and $r_{ij}=\norm{r_i-r_j}_E$ denotes the distance between them,
and $\kappa>0$ is a positive constant.
The case $\kappa=1$ corresponds to the
Newtonian gravitational model. According to
Newton's second law, the motion is governed by the system
of second order differential equations
\begin{equation}\label{eq:Newton}
m_i\ddot r_i=\frac{\partial U}{\partial r_i}(x)=
\sum_{j\neq i}\,m_im_j\,r_{ij}^{\;-(\kappa+2)}\,(r_j-r_i),
\;\;\;\textrm{ for }i=1,\dots,N
\end{equation}
or if we want,
\[
\ddot r_i=\frac{1}{m_i}\frac{\partial U}{\partial r_i}(x)=
\sum_{j\neq i}\,m_j\,r_{ij}^{\;-(\kappa+2)}\,(r_j-r_i),
\;\;\;\textrm{ for }i=1,\dots,N.
\]
This motivates the following definition.

\begin{definition}[The mass inner product]
\label{def:mass.inner.product}
Given two configurations $x=(r_1,\dots,r_N)$ and $y=(s_1,\dots,s_N)$,
their mass inner product in $E^N$ is
\[
\inner{x}{y}=m_1\inner{r_1}{s_1}_E+\dots+m_N\inner{r_N}{s_N}_E.
\]
\end{definition}

\begin{remark}
With respect to the mass inner product, the gradient
of $U$ writes
\[
\nabla U(x)=
\left(\frac{1}{m_1}\frac{\partial U}{\partial r_1}(x),\;\dots\;,
\frac{1}{m_N}\frac{\partial U}{\partial r_N}(x)
\right)
\]
and Newton's equations simply become
\begin{equation}
\label{eq:Newton.nabla}
\ddot x=\nabla U(x).
\end{equation}
\end{remark}

It is clear that Newton's law defines
an analytic local (non complete) flow in
the phase space, that is, the tangent space
of the open and dense set of configurations
without collisions. More precisely, motions take place
in the configuration space
\[
\Omega=\set{x\in E^N\mid U(x)<+\infty}
\]
and the phase space is
$T\Omega=\Omega\times E^N$.

\begin{notation}
As usual we will denote $\varphi^t$ this local
flow. More precisely, for any $(x_0,v_0)\in\Omega\times E^N$,
we write $\varphi^t(x_0,v_0)$ to designate the pair
$(x(t),\dot x(t))$, where $x(t)$ describes the
maximal solution of Newton's equation (\ref{eq:Newton.nabla})
for the initial conditions
$x(0)=x_0$ and $\dot x(0)=v_0$.  
\end{notation}

\subsection{Reduction of the center of mass}
\label{ssec:reduction.cm}
Let us explain now how the reduction due to the conservation
of linear momentum can be done.
Since the sum of the forces is zero,
the center of mass has a uniform rectilinear motion.
In our case, this is expressed as follows.
Let $G:E^N\to E$ be the linear map giving the center of mass
of a given configuration, that is, the unique vector $G(x)\in E$ such that
\[
(m_1+\dots +m_N)\,G(x)=m_1r_1+\dots+ m_Nr_N\,.
\]
Clearly, if $x(t)$ is a solution of Newton's equations (\ref{eq:Newton}),
and we write $c(t)$ for $G(x(t))$, then we have
that $\ddot c=0$, hence $c(t)=v_0\,t+c_0$ for some vectors $c_0, v_0\in E$.
More precisely, we have that $c_0=G(x(0))$ is the center of mass of
the initial configuration, and $v_0=\dot c(t)=G(\dot x(t))$,
\emph{the linear momentum of the motion}, is the center of mass
of the configuration of velocities, which is constant.

Therefore, assuming that the center of mass $c(t)$
of a given motion $x(t)$ is known, it is enough to understand
the dynamics of the relative positions
\[
s_i(t)=r_i(t)-c(t)
\]
or, if we want, the relative configuration $y(t)=x(t)-\delta(c(t))$,
where $\delta:E\to E^N$ is the linear map $\delta(r)=(r,\dots,r)$
for all $r\in E$.

\begin{definition}[Configurations of total collision]
\label{def:total.coll}
The space of configurations in which all positions coincide is denoted
\[
\Delta=\Ima \delta \subset E^N\,.
\]
\end{definition}

We observe that $\dim \Delta = \dim E$, and that $\Delta$
does not depend on the masses of the bodies.
However, its orthogonal complement does depend
on the masses, and it is precisely the subspace
of configurations with center of mass at the origin.

\begin{definition}[Centered configurations]
\label{def:centered.conf}
The space of configurations with center of mass at
the origin is denoted
\[
E^N_0=\Delta^\perp=\ker G\,.
\]
\end{definition}

Given any motion $x(t)$, the relative motion
with respect to the center of mass $c(t)$ is
$y(t)=x(t)-\delta(c(t))$. Thus, $y(t)$ is another motion
which has constant center of mass at the origin.
Finally it follows that every motion is of the form
$x(t)=y(t)+\delta(c_0+tv_0)$ where $y(t)$
is a motion in the \emph{invariant} space $E^N_0$.

For this reason, in almost all the literature,
this reduction is naturally proposed at the beginning.
However, for the purposes of this work,
we prefer to keep in mind the full space,
because the reduction will provide us with one of the
examples of decomposition for the Jacobi equation.

\subsection{The second derivative and the Hessian}
\label{ssec:second.derivative.HU}

Let us recall that in the general setting
of a smooth manifold, only the first differential $dU$
of a smooth function is well defined.
In order to define a symmetric two-tensor that fulfills
the role of the second derivative
we need to use a parallel transport.
More precisely, given two tangent vectors $u,v$
at some point $x$, we have to derive
\[
\eval{\frac{d}{dt}}{t=0}\left(d_{x(t)}U(v(t))\right)
\]
where $x(t)$ is such that $x(0)=x$, $\dot x(0)=u$,
and $v(t)$ is a parallel transport of $v=v(0)$
along $x(t)$.
If a Riemannian metric is given,
then $dU$ can be represented by the gradient vector $\nabla U$,
and we can define the Hessian endomorphism by
\[
HU_x(u)=\nabla_u\nabla U\;(x)\,.
\]
Then the Hessian tensor defined by
\[
\hess U_x(u,v)=\inner{HU_x(u)}{v}
\]
becomes symmetric, because of the symmetry and compatibility
of the connection with the metric.
Also, this two-tensor corresponds to the second derivative
for the Riemannian connection since, taking
$v(t)$ parallel along $x(t)$ as before, we have
\[
\inner{HU_x(u)}{v}=
\inner{D\nabla U(0)}{v}=
\eval{\frac{d}{dt}}{t=0}\inner{\nabla U(x(t))}{v(t)}
\]
where capital $D$ denotes the covariant derivative.
As we have said, we will consider the space of configurations
$E^N$ endowed with the mass inner product,
which can be viewed as a Riemannian metric.
Since the mass product is a constant metric,
parallel vectors are constant vectors, thus actually we have
\[
\hess U_x(u,v)=
\eval{\frac{d}{dt}}{t=0}\,dU_{x+tu}\;(v)=
\eval{\frac{d}{dt}}{t=0}\eval{\frac{d}{ds}}{s=0}
U(x+tu+sv)
\]
for any $(u,v)\in E^N\times E^N$.
Therefore, in terms of matrices we can write
\[
\hess U_x(u,v)=u\,D^2U_x\,v^{\,T}
\]
where $D^2U_x$ is the symmetric matrix
\[
D^2U_x=
\left(
\frac{\partial^2U}{\partial r_i\partial r_j}(x)
\right)_{i\,,\,j\,=\,1,\,...\,,\,N}
\]
whose entries are in turn the symmetric matrices 
\[
\frac{\partial^2U}{\partial r_i\partial r_j}(x)=
\left(
\frac{\partial^2U}{\partial r_i^{\,k}\,\partial r_j^{\,l}}(x)
\right)_{k\,,\,l\,=\,1,\,...\,,\,d}
\]
being $d=\dim E$.

Our point here is just that, as well as the gradient
with respect to the mass inner product turns out to be
the most convenient and natural for analyzing
the equations of motion,
for study the first variations it is natural
and convenient to consider the Hessian endomorphism
with respect to the mass inner product.
We hope that what follows will convince the reader of this statement.

\begin{notation}
If $U$ is a smooth function on a vector space,
let us denote $d^2U_x$ the usual second derivative
of $U$. Of course, this symmetric two-tensor is
the common Hessian of all constant metrics on the vector space.
\end{notation}

\begin{definition}
[The Hessian for the mass inner product]
\label{def:Hessian}
For any configuration without collisions $x\in\Omega$,
the Hessian endomorphism of $U$ is the unique operator
\[
HU_x:E^N\to E^N
\]
representing the second derivative of the Newtonian potential
at configuration $x$ 
with respect to the mass inner product, that is to say,
such that for all $u,v\in E^N$
\[
d^2U_x(u,v)=\inner{HU_x(u)}{v}\,.
\]
\end{definition}

The Hessian $HU_x$ is obviously a self-adjoint operator
for the mass inner product in $E^N$.
Moreover, it follows from the definition and
previous observations that
\begin{equation}
\label{eq:Hessian.formula}
HU_x(u)=\eval{\frac{d}{dt}}{t=0}\nabla U(\gamma(t))    
\end{equation}
where $\gamma(t)$ is any curve such that $\gamma(0)=x$
and $\dot\gamma(0)=u$.
In terms of the above defined matrices,
using a canonical basis,
as well a diagonal matrix $M$,
we can identify this endomorphism with the matrix
\begin{equation}
\label{eq:matrix.HU}
\textrm{HU}_x=\left(\frac{1}{m_i}
\frac{\partial^2U}{\partial r_i\partial r_j}(x)
\right)_{i\,,\,j\,=\,1,\,...\,,\,N}\,=
M^{-1}D^2U_x
\end{equation}

\subsection{The Jacobi equation}
\label{ssec:Jacobi.eq}
Let $x$ be a solution of $\ddot x=\nabla U(x)$
for $t\in (a,b)$, and let also a smooth family
$x_s(t)$ of such solutions,
$s\in (-\epsilon,\epsilon)$ and such that $x_0=x$.
To this variation, there is an associated vector field,
namely the field
\begin{equation}\label{eq:Jacobi.field}
J:(a,b)\to E^N\;,\quad
J(t)=\eval{\frac{d}{ds}}{s=0}\,x_s(t)\,.    
\end{equation}
It is easy to see that this kind of vector fields along
the motion $x(t)$ are exactly the solutions
to the \emph{Jacobi equation}
\begin{equation}\label{eq:Jacobi.equation}
\ddot J=HU_{x(t)}(J),    
\end{equation}
which has unique solution for initial conditions
$J(t_0)=J_0$ and $\dot J(t_0)=\dot J_0$.
Since our space has linear structure, for
these initial conditions we can define the
variation as
$x_s(t)=\pi\circ
\varphi^t(\,x(t_0)+s\,J_0\,,\,\dot x(t_0)+s\,\dot J_0)$,
where $\pi:\Omega\times E^N\to\Omega$ returns the first component.
On the other hand, given $J(t)$ defined as
in (\ref{eq:Jacobi.field}), one has
\[
\ddot J(t)=
\frac{d^2}{dt^2}\eval{\frac{d}{ds}}{s=0}x_s(t)=
\eval{\frac{d}{ds}}{s=0}\nabla U(x_s(t))=
HU_{x(t)}\,(J(t))
\]

\begin{definition}
Solutions of a Jacobi equation are called Jacobi fields.
\end{definition}

\begin{remark}
Given any motion $x(t)$ the velocity field $J(t)=\dot x(t)$
is the trivial Jacobi field, associated to the variation
$x_s(t)=x(t+s)$.
\end{remark}
\vs

\section{The splitting lemma for the Jacobi equation}

In all the cases we will consider, the decomposition
will always result from the application of the following
splitting lemma, which, despite being elementary,
we have not seen stated in the literature.
For this reason, we think it is worth giving
an abstract formulation for it.

\begin{definition}[Invariant subspace]
\label{def:invariant.subspace}
Let $O\subset E$ be an open subset of a Euclidean space,
and let $U:O\to\R$ be a $C^2$ function.
We say that a subspace $V\subset E$ is an invariant subspace
for $\ddot x=\nabla U(x)$ if we have
$\nabla U(x)\in V$ for all $x\in O\cap V$. 
\end{definition}

Equivalently,
$V\subset E$ is an invariant subspace for $\ddot x=\nabla U(x)$
if and only if any motion $x(t)$ with initial conditions
$x(t_0),\dot x(t_0)\in V$ is contained in $V$.
The following is a more precise formulation of Lemma
\ref{lemma:Jacobi.splitting} stated in the introduction.

\begin{lemma}[Splitting lemma]
\label{lemma:splitting}
Let $O\subset E$ be an open subset of a Euclidean space
and $U:O\to\R$ a $C^2$ function. If $V\subset E$
is an invariant subspace for $\ddot x=\nabla U(x)$, and $W=V^\perp$,
then for any configuration $x\in V$ we have that
\[
HU_x(V)\subset V\;,
\quad\textrm{and}\quad
HU_x(W)\subset W\,.
\]
In particular, given a Jacobi field $J(t)$
along a motion $x(t)$ in $V$,
it decomposes as $J(t)=J_V(t)+J_W(t)$, where the terms
are respectively solutions of
\[
\ddot J=\eval{(HU_x)}{V}(J)\;,
\quad\textrm{and}\quad
\ddot J=\eval{(HU_x)}{W}(J)\,.
\]
\end{lemma}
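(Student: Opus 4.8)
The plan is to establish the two inclusions $HU_x(V)\subset V$ and $HU_x(W)\subset W$ directly from the definitions, and then to read off the decomposition of the Jacobi equation by projecting onto the two summands. For the first inclusion I would fix $x\in V\cap O$ and $u\in V$, and use the characterization (\ref{eq:Hessian.formula}) of the Hessian endomorphism as a derivative of the gradient along a curve. Taking the straight curve $\gamma(t)=x+tu$, which remains in $V\cap O$ for $t$ small since $V$ is a subspace and $O$ is open, the invariance hypothesis gives $\nabla U(\gamma(t))\in V$ for all such $t$. As $V$ is a finite-dimensional, hence closed, linear subspace, the difference quotients $(\nabla U(\gamma(t))-\nabla U(x))/t$ lie in $V$, and so does their limit. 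Therefore $HU_x(u)\in V$, which proves $HU_x(V)\subset V$.

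The key step is the second inclusion, and this is where the mass inner product earns its keep: the endomorphism $HU_x$ is self-adjoint for this product. Granting $HU_x(V)\subset V$, I would argue that $W=V^\perp$ is automatically invariant. Indeed, for $w\in W$ and any $v\in V$, self-adjointness yields $\inner{HU_x(w)}{v}=\inner{w}{HU_x(v)}$, which vanishes because $HU_x(v)\in V$ by the first step while $w\perp V$. Hence $HU_x(w)$ is orthogonal to all of $V$, that is, $HU_x(w)\in W$. I expect no genuine obstacle here; the only point worth emphasizing is that this step would fail for a non-self-adjoint operator, which is precisely the reason for replacing the product structure by the mass inner product throughout.

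Finally, for the splitting of the Jacobi equation, let $P_V$ and $P_W$ denote the orthogonal projections onto $V$ and $W$, and write $J=J_V+J_W$ with $J_V=P_V J$ and $J_W=P_W J$. Since $V$ and $W$ are fixed subspaces, the projections commute with $d^2/dt^2$, so $\ddot J_V\in V$ and $\ddot J_W\in W$. Applying the two inclusions along the motion $x(t)\in V$ gives $HU_{x(t)}(J_V)\in V$ and $HU_{x(t)}(J_W)\in W$, so comparing the $V$- and $W$-components in $\ddot J=HU_{x(t)}(J)=HU_{x(t)}(J_V)+HU_{x(t)}(J_W)$ yields $\ddot J_V=HU_{x(t)}(J_V)$ and $\ddot J_W=HU_{x(t)}(J_W)$, which are exactly the two restricted equations. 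The whole argument is short; the only place requiring a moment's care is the closedness used to pass the derivative inside $V$ in the first step, but in finite dimensions this is immediate.
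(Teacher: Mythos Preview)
Your proof is correct and follows essentially the same route as the paper's: the first inclusion via the derivative of $\nabla U$ along the straight line in $V$, the second via self-adjointness of $HU_x$, and the Jacobi splitting by projecting and using uniqueness of the decomposition $V\oplus W$. The paper's version is terser but identical in substance.
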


\begin{proof}
The proof is trivial. If $x\in O\cap V$, and $v\in V$,
then we have that
\[
HU_x(v)=\eval{\frac{d}{dt}}{t=0}\,\nabla U(x+tv)\in V
\]
and we conclude that $HU_x(V)\subset V$.
Since $HU_x$ is self-adjoint for the inner product of $E$,
we also conclude that $HU_x(W)\subset W$.
Therefore,
if we decompose the Jacobi field as $J(t)=J_V(t)+J_W(t)$,
we can write
\[
\ddot J=\ddot J_V+\ddot J_W
\]
\[
HU_{x(t)}(J)=HU_{x(t)}(J_V)+HU_{x(t)}(J_W)
\]
hence the proof follows from the uniqueness
of the decomposition in $E=V\oplus W$.
\end{proof}

\section{Applications to the {\it N}-body problem}
\label{sec:applications.list}

We give now four examples in the case
of the $N$-body problem for which our splitting lemma applies.
Although the first two cases are essentially trivial,
they allow us to complete the description
We are particularly interested in the third example,
which applies to homographic orbits,
because from it one can deduce the well-known
Meyer-Schmidt symplectic decomposition
\cite[Proposition 2.1, p.260]{MeyerSchmidt}.

\begin{enumerate}
    \item[(i)]
    \emph{Reduction of the center of mass}.
    This is the simplest example we can consider:
    Take $V=E^N_0$, then $V^\perp=\Delta$ is the
    space of configurations of total collision.
    Clearly we have $\nabla U(x)\in E^N_0$ for all $x\in\Omega$,
    therefore the splitting lemma applies.
    If $x(t)$ is a motion with fixed center of mass,
    then any Jacobi field decomposes as
    \[
    J(t)=J_0(t)+J_\Delta(t)
    \]
    where $\ddot J_0=HU_0(J)$, $HU_0=\eval{(HU_x)}{E^N_0}$,
    and $\ddot J_\Delta=0$ since $\Delta\subset\ker HU_x$
    for any $x\in\Omega$.
    Of course, this also follows immediately from the variations
    we can make to the form $x_s(t)=x(t)+t\delta(sv)+\delta(sc)$
    for any $v,c\in E$, see the reduction made in section
    \ref{ssec:reduction.cm}.\vs
    
    \item[(ii)]
    \emph{Coplanar configurations}.
    Suppose the $N$ bodies move in a three-dimensional space $E=\R^3$.
    If we fix a plane $S\subset E$, we can consider motions in which
    all bodies remain in $S$. In this case, or more generally if
    we consider any fixed $k$-dimensional subspace $S$ of a
    $d$-dimensional Euclidean space $E$, we have that $V=S^N\subset E^N$
    is an invariant subspace with orthogonal $W=(S^\perp)^N$.\vs
    
    \item[(iii)]
    \emph{Planar central configurations as Keplerian embeddings}.
    Let us recall that a central configuration is a configuration
    admitting homothetic motions. In other words, $x_0$ is a
    central configuration if there are motions of the form
    $x(t)=\varphi(t)\,x_0$,
    being $\varphi$ a positive real valued function\footnote{\,
    This definition implies that central configurations
    must be centered at the origin.
    Some authors allow the homotheties
    to be with respect to the center of mass of the configuration,
    thus in this sense, they allow any translation of a central
    configuration to be also central.}.
    Recalling that the potential $U$
    is homogeneous of degree $-\kappa<0$,
    it turns out that such a curve is a true motion,
    if and only if
    \begin{equation}\label{eq:cc.nablaU//x}
    \nabla U(x_0)=\lambda\,x_0\;\quad\textrm{ for some }\lambda\in\R
    \end{equation}
    and $\varphi$ satisfies
    \begin{equation}
    \ddot \varphi(t)\,x_0=
    \nabla U(\varphi(t)\,x_0)=\varphi(t)^{-(\kappa+1)}\lambda\,x_0.
    \end{equation}
    \vs
    From equation (\ref{eq:cc.nablaU//x}) and Euler's theorem
    we get that
    \[
    \lambda=-\kappa \,\frac{U(x_0)}{I(x_0)}\,,
    \quad\textrm{ where }\quad
    I(x_0)=\norm{x_0}^2
    \]
    is the moment of inertia of the configuration
    with respect to the origin.
    We also deduce that $\varphi(t)$ must solve the one-dimensional
    equation in $\R$
    \begin{equation}\label{eq:central.force.kappa}
    \ddot \varphi=\lambda\,\varphi^{-(\kappa+1)}.    
    \end{equation}
    Since we are considering the planar case, that is $\dim E=2$,
    we can make a natural identification of $E^N\simeq \C^N$,
    thus providing an action of $\C$ on the space of configurations.
    As is known, it turns out that solutions to
    the central force problem (\ref{eq:central.force.kappa}) in $\C$,
    produce homographic solutions of (\ref{eq:Newton}).
    When $\kappa=1$ the central force problem is nothing but
    the Kepler problem in the plane, and solutions are Keplerian
    ellipses, parabolas or hyperbolas.
    It is clear that if we define the subspace $K=\C\,x_0$,
    then $\nabla U$ is a central field on $K$,
    which implies that $K$ is invariant.
    Therefore our lemma applies and
    $HU_{x_0}$ splits for the orthogonal sum $K\oplus K^\perp$.
    \vs

    \item[(iv)]
    \emph{Isosceles configurations}.
    This is a well-known example of a subproblem
    of the three-body problem when two masses are equal.
    Without loss of generality we can assume that the value
    of the two equal masses is $1$, and we denote $m>0$
    the value of the third mass.
    Assume that $\dim E=3$, and let $L\subset E$ be a
    one-dimensional subspace.
    Let $r_1$ and $r_2$ be the positions of the two equal masses,
    and call $r_3$ the position of the mass $m$.
    Then we define the space of isosceles configurations
    \[
    I=\set{(r_1,r_2,r_3)\in E^3_0\textrm{ such that }
    \;r_3\in L\textrm{ and }\;r_2-r_1\in L^\perp}.
    \]
    It is not difficult to verify that $I$ is an invariant
    subspace for the classical three-body problem, as well
    as that for any potential defining attractive forces
    only depending on the distances.
    
    In this case, the application of the splitting lemma gives
    for any $x\in I\setminus\Delta$ a splitting of $HU_x$
    with respect to the orthogonal sum
    $E^3=\Delta\oplus I\oplus D$ where $I\oplus D=E^3_0$.
    Note that each one of the terms has dimension $3$.
\end{enumerate}
\vs

\subsection{The Meyer-Schmidt decomposition}
We will see now how the well-known Meyer-Schmidt decomposition
can easily be deduced.
Recall that it deals with the case of a planar central
configurations of the Newtonian $N$-body problem and their
homographic elliptic motions.
Although these authors carry out the decomposition
for the Newtonian case, we can see that our point of view also
applies to any homogeneous potential, or even for the logarithmic one.
Regarding these potentials, the stability of their circular
equilateral relative equilibria was studied by
Barutello, Jadanza and Portaluri, who adapted the
construction of symplectic coordinates in the phase space
specifically for these motions, see
\cite[p.10--11]{BarutelloJadanzaPortaluri}.

Let $x_0$ be a planar central configuration, and let
$K=\C\,x_0$ the space of all central configurations,
similar to $x_0$, and having the same orientation.
The splitting lemma gives then\footnote{\,Recall that
$\Delta$ is the space of configurations of total collisions,
composed by configurations of the form $x=(r,\dots,r)$,
and that $\Delta^\perp=E_0^N$ is the space of configurations
with center of mass at the origin of $E$ (see definitions
\ref{def:total.coll} and \ref{def:centered.conf}).
Note that $K$ is composed by central configurations,
so we have $K\subset E_0^N$. Since $K$ is $HU$ invariant,
as well as $\Delta$ (see item (i) in the previous section),
we deduce the invariance of $D=(\Delta+K)^\perp$, that is
the orthogonal of $K$ inside $E_0^N$.}
the orthogonal decomposition
for the Hessian endomorphism $HU_x$,
for any configuration $x\in K$,
\[
E^N=\Delta\oplus K\oplus D\,.
\]
Since $\dim \Delta=\dim K =2$, we have $\dim D=2(N-2)$.
Therefore, given any homographic motion $x(t)$ in $K$,
his Jacobi fields decompose as
\[J=J_\Delta+J_K+J_D\,.\]

Observing that the linearization of the Lagrangian flow
in the phase space is nothing but the first order
system of differential equations
\begin{equation}\label{eq:linear.E-L}
\begin{array}{rcl}
    \dot J&=&Z\\
    \dot Z&=&HU_x(J)
\end{array}
\end{equation}
we conclude that the spaces $\calA_0=\Delta\times\Delta$,
$\calB_0=K\times K$ and $\calC_0=D\times D$ are invariant
for the linearized Lagrangian flow. Note that these spaces
have dimension $4$, $4$ and $4(N-2)$ respectively.
Taking finally the Legendre transform of these three subspaces
we get the three subspaces $\calA$, $\calB$ and $\calC$
in the cotangent space, which are invariant for the linearization
of the Hamiltonian flow, and to which the Meyer-Schmidt
symplectic coordinates are associated.
More precisely, The Legendre transform is
the diffeomorphism $\calL:T\Omega\to T^*\Omega$
\[
\calL(x,v)=
\left(x,L_v(x,v)\right)
\]
where
\[
L(x,v)=\frac{1}{2}\norm{v}^2+U(x)
\]
is the Lagrangian of the system. Recalling
that the Legendre transform conjugates the
Lagrangian with the Hamiltonian flow,
we conclude that their linearizations are
conjugated by the differential $d\calL$.
It follows, since in our case the Lagrangian is quadratic,
that the Legendre transform is linear, and so we have $d\calL=\calL$.
The conjugation is then performed by $\calL$.

Using the identification of $T_x^*\Omega=(E^*)^N$
we get the classical expression
\[
\calL(x_1,\dots,x_N,v_1,\dots,v_N)=
(x_1,\dots,x_N,m_1v_1,\dots,m_Nv_N)
\]
for the Legendre transform, which explains the description
of the Meyer-Schmidt coordinates in
\cite[p. 259]{MeyerSchmidt}, that is to say,
\begin{eqnarray*}
\calA&=&\calL(\Delta\times\Delta)=
\set{(\,b\,,\dots,\,b\,,\,m_1c\,,\dots,\,m_Nc\,)\mid b,c\in\R^2},\\    
\calB&=&\calL(K\times K)=
\set{(\,za_1\,,\dots,\,za_N\,,\,m_1w\,a_1\,,\,\dots\,,\,m_Nw\,a_N\,)
\mid z,w\in\C},
\end{eqnarray*}
where $x_0=(a_1,\dots,a_N)$ is the central configuration
so that $K=\C\,.\,x_0$, and
\[
\calC=\calL(D\times D)=
\set{
(\,d_1\,,\,\dots\,,\,d_N\,,\,m_1d'_1\,,\,\dots\,,\,m_Nd'_N\,)
\mid d=(\,d_i\,),\,d'=(\,d'_i\,)\in D}.
\]
Proposition $2.1$ and Theorem $2.1$ in the paper of
Meyer and Schmidt \cite{MeyerSchmidt}
follows immediately from the orthogonality of our
subspaces $\Delta$, $K$ and $D$.
\vs

\section{Instability of elliptic homographic motions}

We will use now our splitting lemma for the Jacobi
equation, more precisely application (iii) of previous section,
in order to prove Theorem \ref{thm:StrongND->unstable}.
In fact, we are going to show that in one of the components,
the one corresponding to the space $D=(K+\Delta)^\perp$,
where $K=\C\,x_0$, all non-trivial solutions
have exponential growth in the future or in the past.
This is equivalent to the normal hyperbolicity
for the linearization of the Lagrangian flow
given by (\ref{eq:linear.E-L}),
implying the linear instability of the motion.

\begin{notation}
In what follows, given $z\in\C$ and $x=(r_1\dots,r_N)\in E^N$,
we will write
\[
z\,x =(z\,r_1\,,\dots,\,z\,r_N).
\]
\end{notation}

\begin{remark}
\label{remark:<zx,zy>=z2<x,y>}
For any two configurations $x,y\in E^N$ and $z\in\C$
we have that
\[
\inner{z\,x}{z\,y}=\abs{z}^2\inner{x}{y}.
\]
\end{remark}

We will use the following lemma, which is due to
the homogeneity and the rotational invariance
of the Newtonian potential.

\begin{lemma}\label{lemma:homogenityHU}
    Let $x\in\Omega$ be any planar configuration, and $z\in\C$
    such that $z\neq 0$. Then for any $v\in E^N$ we have
    \[
    HU_{z\,x}\,(v)=z\,\abs{z}^{-3}HU_x(z^{-1}v).
    \]
\end{lemma}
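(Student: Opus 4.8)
The plan is to establish the scaling identity
\[
HU_{z\,x}(v)=z\,\abs{z}^{-3}HU_x(z^{-1}v)
\]
by differentiating the analogous scaling law for the gradient, so I would first look for how $\nabla U$ transforms under the complex scalar action $x\mapsto z\,x$. The Newtonian potential is homogeneous of degree $-\kappa$ (with $\kappa=1$ in the classical case), and it is rotationally invariant; both properties should pass to the gradient taken with respect to the mass inner product. Concretely, I would first prove the first-order identity $\nabla U(z\,x)=z\,\abs{z}^{-(\kappa+2)}\nabla U(x)$, which in the Newtonian case $\kappa=1$ reads $\nabla U(z\,x)=z\,\abs{z}^{-3}\nabla U(x)$. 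To see this, write $z=\rho\,e^{i\theta}$ and split the action into a rotation (multiplication by $e^{i\theta}$) and a real dilation (multiplication by $\rho$); rotational invariance of $U$ gives $\nabla U(e^{i\theta}x)=e^{i\theta}\nabla U(x)$, while homogeneity of degree $-\kappa$ gives $\nabla U(\rho\,x)=\rho^{-(\kappa+1)}\nabla U(x)$. Composing these two, and using $\abs{z}=\rho$, yields the claimed gradient identity.

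Next I would obtain the Hessian identity by differentiating. Using formula~(\ref{eq:Hessian.formula}), namely $HU_x(u)=\eval{\frac{d}{dt}}{t=0}\nabla U(\gamma(t))$ for any curve $\gamma$ with $\gamma(0)=x$ and $\dot\gamma(0)=u$, I would compute $HU_{z\,x}(v)$ by choosing the straight line $\gamma(t)=z\,x+t\,v$. The key observation is that $z\,x+t\,v=z\,(x+t\,z^{-1}v)$, so applying the gradient scaling law gives
\[
\nabla U(z\,x+t\,v)=\nabla U\bigl(z\,(x+t\,z^{-1}v)\bigr)
=z\,\abs{z}^{-3}\,\nabla U(x+t\,z^{-1}v).
\]
Differentiating at $t=0$, the factor $z\,\abs{z}^{-3}$ is constant in $t$ and pulls out, while the derivative of $\nabla U(x+t\,z^{-1}v)$ at $t=0$ is exactly $HU_x(z^{-1}v)$, again by~(\ref{eq:Hessian.formula}) with the curve $t\mapsto x+t\,z^{-1}v$. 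This produces the desired formula directly.

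The main technical point to get right is the justification that rotational invariance of $U$ forces the equivariance $\nabla U(e^{i\theta}x)=e^{i\theta}\nabla U(x)$ of the gradient. This is where the choice of the mass inner product matters: because multiplication by $e^{i\theta}$ acts componentwise as an orthogonal transformation of $E=\R^2$, Remark~\ref{remark:<zx,zy>=z2<x,y>} (with $\abs{z}=1$) shows it is an isometry of $E^N$ for the mass inner product, and the gradient of a function invariant under an isometry is equivariant. I would state this carefully rather than treat it as obvious, since the equivariance is precisely what couples the complex structure to the metric. The homogeneity part is routine via Euler's relation or direct differentiation of $U(\rho\,x)=\rho^{-\kappa}U(x)$, and the remaining steps are elementary manipulations, so I do not expect further obstacles beyond the clean treatment of the invariance.
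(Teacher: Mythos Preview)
Your proposal is correct and follows essentially the same route as the paper: first establish the gradient scaling law $\nabla U(z\,x)=z\,\abs{z}^{-3}\nabla U(x)$, then differentiate along the line $t\mapsto z\,x+t\,v=z\,(x+t\,z^{-1}v)$ using formula~(\ref{eq:Hessian.formula}) to obtain the Hessian identity. The only cosmetic difference is that the paper derives the gradient law in one shot from $U(z\,x)=\abs{z}^{-1}U(x)$ together with Remark~\ref{remark:<zx,zy>=z2<x,y>}, whereas you split $z=\rho\,e^{i\theta}$ and handle the rotation and dilation separately; both arguments are equivalent and equally short.
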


\begin{proof}
For any configuration $x\in\Omega$ and $z\neq 0$ we have $U(z\,x)=\abs{z}^{-1}U(x)$.
Thus the chain rule gives
\[
dU_{z\,x}(v)=\abs{z}^{-1}dU_x(z^{-1}v),
\]
hence, using the definition of the gradient and
Remark \ref{remark:<zx,zy>=z2<x,y>} above, we have
\begin{eqnarray*}
\inner{\nabla U(z\,x)}{v}&=&\abs{z}^{-1}\inner{\nabla U(x)}{z^{-1}v}\\
&&\\
&=&\,\abs{z}^{-3}\inner{z\,\nabla U(x)}{v}
\end{eqnarray*}
for any $v\in E^N$, therefore
\[
\nabla U(z\,x)=z\abs{z}^{-3}\nabla U(x).
\]
The proof is then achieved by using formula \ref{eq:Hessian.formula},
since given $u\in E^N$ we can write
\begin{eqnarray*}
HU_{z\,x}(u)&=&
\eval{\frac{d}{ds}}{s=0}\;\nabla U(z\,x+su)\\
&&\\
&=&\;z\,\abs{z}^{-3}\;\eval{\frac{d}{ds}}{s=0}\;
\nabla U(x+sz^{-1}u)\\
&&\\
&=&\;z\,\abs{z}^{-3}HU_x(z^{-1}u)\,.
\end{eqnarray*}
\end{proof}

The previous lemma allows us to immediately deduce the following.

\begin{lemma}
Let $x_0$ be a planar central configuration, let $K=\C\,x_0$
and let
\[
D=(K+\Delta)^\perp=K^\perp\cap E^N_0.
\]
If $z(t)$ is a solution to the Kepler
problem in the plane
\[
\ddot z=-\frac{U(x_0)}{I(x_0)}\;z^{-2}
\]
and $x(t)=z(t)\,x_0$ the corresponding homographic solution
of the planar $N$-body problem, then writing
$A_t=HU_{x(t)}$ we have that
\[
\inner{A_t(v)}{v}=\,\abs{z(t)}^{-1}\inner{A_0(\,z(t)^{-1}v\,)}{z(t)^{-1}v}
\]
for all $t\in\R$ and any $v\in D$.
\end{lemma}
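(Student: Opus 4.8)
The plan is to read the identity as a direct algebraic consequence of the two facts just established: the homogeneity formula of Lemma \ref{lemma:homogenityHU} and the scaling of the mass inner product recorded in Remark \ref{remark:<zx,zy>=z2<x,y>}. The essential observation is that $A_t=HU_{x(t)}=HU_{z(t)\,x_0}$, so the Hessian endomorphism at time $t$ is obtained from $A_0=HU_{x_0}$ purely through the action of the complex scalar $z(t)$; the statement is then just a repackaging of that dependence at the level of the quadratic form.

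First I would observe that $z(t)\neq 0$ for every $t$ in the interval of definition of the motion: since $x(t)=z(t)\,x_0$ is a genuine collisionless solution of the $N$-body problem, a zero of $z(t)$ would force $x(t)=0$, a total collision, which is excluded. Hence Lemma \ref{lemma:homogenityHU} applies with $x=x_0$ and $z=z(t)$, yielding
\[
A_t(v)=HU_{z(t)\,x_0}(v)=z(t)\,\abs{z(t)}^{-3}\,A_0\bigl(z(t)^{-1}v\bigr)
\]
for every $v\in E^N$. I would then pair this with $v$ and evaluate $\inner{A_t(v)}{v}$, writing $v=z(t)\bigl(z(t)^{-1}v\bigr)$ so that both slots carry the scalar $z(t)$ and Remark \ref{remark:<zx,zy>=z2<x,y>} becomes applicable.

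The only point requiring care is the bookkeeping of the complex scalar against its modulus. One first pulls the \emph{real} factor $\abs{z(t)}^{-3}$ out of the (real) mass inner product, leaving $\inner{z(t)\,A_0(z(t)^{-1}v)}{\,z(t)\,(z(t)^{-1}v)\,}$; applying Remark \ref{remark:<zx,zy>=z2<x,y>} to the two $z(t)$-scaled configurations then produces the factor $\abs{z(t)}^{2}$, and the net exponent $-3+2=-1$ gives exactly $\abs{z(t)}^{-1}\inner{A_0(z(t)^{-1}v)}{z(t)^{-1}v}$, as claimed. There is no genuine obstacle: the whole content sits in the two preceding results. I would only remark that the identity in fact holds for every $v\in E^N$, the restriction $v\in D$ being merely the case of interest for the application; it is moreover consistent, since $D=K^\perp\cap E^N_0$ is invariant under the $\C$-action, so $z(t)^{-1}v\in D$ whenever $v\in D$, and the formula is thus naturally adapted to tracking the restricted form $\eval{A_t}{D}$ along the homographic motion.
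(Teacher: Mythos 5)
Your proposal is correct and follows essentially the same route as the paper: apply Lemma \ref{lemma:homogenityHU} with $z=z(t)$, then write $v=z(t)\bigl(z(t)^{-1}v\bigr)$ and use Remark \ref{remark:<zx,zy>=z2<x,y>} to trade the factor $z(t)$ for $\abs{z(t)}^{2}$, giving the net exponent $-3+2=-1$. Your additional observations (that $z(t)\neq 0$, and that the identity holds for all $v\in E^N$ with $v\in D$ merely the case of interest) are accurate refinements of the same argument.
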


\begin{proof}
By the previous lemma we have
\begin{eqnarray*}
\inner{A_t(v)}{v}&=&\inner{HU_{z(t)\,x_0}(v)}{v}=
\abs{z(t)}^{-3}\;\inner{\,z(t)\,HU_{x_0}(z(t)^{-1}v)}{v}\\
&&\\
&=&\abs{z(t)}^{-1}\;\inner{HU_{x_0}(z(t)^{-1}v)}{z(t)^{-1}v}\\
&&\\
&=&\abs{z(t)}^{-1}\inner{A_0(\,z(t)^{-1}v\,)}{z(t)^{-1}v}
\end{eqnarray*}
\end{proof}

In particular, if the solution $z(t)$ is an ellipse,
we get a positive lower bound for the quadratic
form defined by $HU$ in $D$ along the homographic solution.
We recall that our definition \ref{def:cc.strong.ND}, of
strongly non-degenerate central configuration
requires precisely that the restriction of the Hessian
to the subspace $D$ be non-degenerate, since we have
$S_{x_0}=K+\Delta$, so $S_{x_0}^\perp=D$.
In other words,
$S_{x_0}$ is the space of configurations obtained from $x_0$
by rotations, homotheties and translations

\begin{lemma}\label{lema:HUx(t)>alpha.x^2}
In the same hypothesis of the previous lemma,
if $z(t)$ is a Keplerian ellipse and we know that
$x_0$ is strongly nondegenerate, see definition \ref{def:cc.strong.ND},
then for any $t\in\R$ and any $v\in D$ we have
\[
\inner{A_t(v)}{v}\geq \;k^{-3}\mu\,\norm{v}^2
\]
where
\[
k=\max_{t\in\R}\;\abs{z(t)}\;,
\quad \textrm{ and }\quad
\mu=\min_{\norm{u}=1}\;\inner{A_0(u)}{u}\,.
\]
\end{lemma}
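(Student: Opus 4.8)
The plan is to feed the identity furnished by the previous lemma,
\[
\inner{A_t(v)}{v}=\abs{z(t)}^{-1}\inner{A_0(\,z(t)^{-1}v\,)}{z(t)^{-1}v}\,,
\]
into the strong non-degeneracy hypothesis. Writing $w=z(t)^{-1}v$, strong non-degeneracy (Definition \ref{def:cc.strong.ND}) says precisely that $A_0=HU_{x_0}$ is positive definite on $D=S_{x_0}^\perp$; thus, taking $\mu$ to be the minimum of $\inner{A_0(u)}{u}$ over unit vectors $u\in D$, we have $\mu>0$ and $\inner{A_0(w)}{w}\geq\mu\norm{w}^2$ whenever $w\in D$. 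Combined with the scaling of the norm from Remark \ref{remark:<zx,zy>=z2<x,y>}, namely $\norm{w}^2=\norm{z(t)^{-1}v}^2=\abs{z(t)}^{-2}\norm{v}^2$, this would yield
\[
\inner{A_t(v)}{v}\geq\abs{z(t)}^{-1}\mu\,\abs{z(t)}^{-2}\norm{v}^2=\abs{z(t)}^{-3}\mu\,\norm{v}^2\,,
\]
after which bounding $\abs{z(t)}\leq k$ finishes the job, since $\mu>0$ forces $\abs{z(t)}^{-3}\mu\geq k^{-3}\mu$.

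For this substitution to be legitimate, the one delicate point — and the step I would treat as the heart of the argument — is that $w=z(t)^{-1}v$ again lies in $D$, so that positive-definiteness of $A_0$ on $D$ can actually be invoked. In other words, I must verify that $D$ is invariant under the $\C$-action on $E^N\simeq\C^N$. I would check this factorwise using $D=K^\perp\cap E^N_0$. First, $E^N_0=\ker G$ is $\C$-invariant because the center of mass map satisfies $G(z\,x)=z\,G(x)$, so $G(x)=0$ implies $G(z\,x)=0$. Second, $K^\perp$ is $\C$-invariant because $K=\C\,x_0$: real orthogonality to $K$ amounts to the vanishing of both $\inner{v}{x_0}$ and $\inner{v}{i\,x_0}$, that is, to Hermitian orthogonality of $v$ to the single vector $x_0$, a condition plainly preserved by multiplication by any $z\in\C$. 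Hence $D$ is $\C$-invariant and $w=z(t)^{-1}v\in D$ for every $t$.

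Finally I would record where the elliptic hypothesis enters. It is exactly the ellipticity of $z(t)$ that makes the orbit bounded and bounded away from the focus at the origin, so that $k=\max_{t}\abs{z(t)}$ is both finite and positive; this is what turns the pointwise estimate into the uniform positive constant $k^{-3}\mu>0$. I expect no hidden difficulty beyond the invariance of $D$: once that is in place the chain of inequalities above is a direct substitution, and the strict positivity of $\mu$, coming from strong non-degeneracy together with the identification $S_{x_0}^\perp=D$, is precisely what upgrades the estimate to a genuine positive lower bound for the quadratic form of $HU$ on $D$ along the entire homographic motion.
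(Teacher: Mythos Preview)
Your proof is correct and follows exactly the approach the paper intends: the paper states this lemma as an immediate consequence of the preceding identity and gives no separate proof. Your explicit verification that $D$ is $\C$-invariant (so that $z(t)^{-1}v\in D$ and the positive-definiteness hypothesis applies) is the only nontrivial step, and you handle it correctly.
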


Now we prove the following comparison theorem,
which is mainly inspired in Rauch's Theorem
\cite[Chapter 10, p. 215]{doCarmo}.
We will compare solutions of two ordinary differential equations
in $\R^n$, namely
\begin{equation}
\label{eq:Hill.equ}
\textrm{(1) }\ddot x=A(t)\,x\;,
\quad\textrm{ and }\quad
\textrm{(2) }\ddot x=\alpha\,x\;,
\end{equation}
where $\alpha>0$, and $A(t)$ is a periodic and positive definite
matrix satisfying
\begin{equation}
\label{eq:A(t).positive}
\inner{A(t)x}{x}\geq \alpha \norm{x}^2
\end{equation}
for all $x\in\R^n$ and all $t\in\R$.

\begin{theorem}[Comparison]\label{thm:comparison}
Let $A(t)\in\calM_n(\R)$ be a matrix satisfying
(\ref{eq:A(t).positive}) for some $\alpha>0$ and every $t>0$.
Let also $x(t)$ be a non trivial solution of $\ddot x=A(t)x$
such that $x(0)=0$, and let $z(t)$ be the solution of
$\ddot x=\alpha x$ with $z(0)=0$ and
$\dot z(0)=\dot x(0)\neq 0$. Then, for all $t>0$ we have
\[
\norm{x(t)}\geq \norm{z(t)}.
\]
\end{theorem}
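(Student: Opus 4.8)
The plan is to reduce the vector comparison to a scalar Sturm-type comparison by tracking the norm $u(t)=\norm{x(t)}$. First I would record that the comparison solution is explicit: since $z(t)$ is a scalar multiple of the fixed vector $\dot x(0)$, its norm $w(t)=\norm{z(t)}=\alpha^{-1/2}\norm{\dot x(0)}\sinh(\sqrt\alpha\,t)$ solves the scalar equation $\ddot w=\alpha w$ with $w(0)=0$ and $\dot w(0)=\norm{\dot x(0)}$. So it suffices to prove $u(t)\geq w(t)$ for all $t>0$.

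The heart of the argument is a differential inequality for $u$. On any interval where $u>0$ the function is smooth, and differentiating $u^2=\inner{x}{x}$ twice gives $\dot u^2+u\ddot u=\norm{\dot x}^2+\inner{x}{A(t)x}$. By Cauchy--Schwarz one has $u^2\dot u^2=\inner{x}{\dot x}^2\leq\norm{x}^2\norm{\dot x}^2$, hence $\dot u^2\leq\norm{\dot x}^2$, and together with the positivity hypothesis (\ref{eq:A(t).positive}) this yields $u\ddot u\geq\inner{x}{A(t)x}\geq\alpha u^2$, that is $\ddot u\geq\alpha u$. Near $t=0$ the expansion $x(t)=t\,\dot x(0)+O(t^3)$, valid because $\ddot x(0)=A(0)x(0)=0$, shows $u(t)=t\,\norm{\dot x(0)}+O(t^3)$, so $u>0$ on a right neighborhood of $0$ and $\dot u(0^+)=\norm{\dot x(0)}=\dot w(0)$.

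With the inequality in hand I would run a two-step integrating-factor comparison on the difference $\psi=u-w$, which satisfies $\ddot\psi\geq\alpha\psi$ with $\psi(0)=0$ and $\dot\psi(0^+)=0$. Setting $p=e^{-\sqrt\alpha\,t}(\dot\psi+\sqrt\alpha\,\psi)$ one computes $\dot p=e^{-\sqrt\alpha\,t}(\ddot\psi-\alpha\psi)\geq 0$, so $p$ is nondecreasing and $p(0)=0$ forces $\dot\psi+\sqrt\alpha\,\psi\geq 0$; then $q=e^{\sqrt\alpha\,t}\psi$ has $\dot q=e^{\sqrt\alpha\,t}(\dot\psi+\sqrt\alpha\,\psi)\geq 0$, is nondecreasing, and $q(0)=0$ gives $\psi\geq 0$, i.e.\ $u\geq w$.

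The one genuine obstacle is regularity: $u=\norm{x}$ need not be differentiable where $x$ vanishes, so the inequality $\ddot u\geq\alpha u$ is only available while $u>0$. I would resolve this by a continuation argument. Let $T^*=\sup\set{T>0\mid u>0\text{ on }(0,T)}$; the steps above apply verbatim on $(0,T^*)$ and give $u\geq w$ there. If $T^*<\infty$ then by continuity $u(T^*)=0$, yet $u(T^*)\geq w(T^*)>0$, a contradiction. Hence $T^*=+\infty$ and $u\geq w$ on all of $(0,\infty)$, which is exactly $\norm{x(t)}\geq\norm{z(t)}$.
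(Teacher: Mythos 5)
Your proof is correct, but it takes a genuinely different route from the paper's. The paper argues variationally: it introduces the action functionals $\calA$ and $\calA_0$ of the two systems, shows that solutions of $\ddot x=\alpha x$ vanishing at the origin minimize $\calA_0$ among curves with the same endpoints (Lemma \ref{lema:z.minimiza}), establishes the identity $h'(t)=2\,\calA(x_t)$ for $h(t)=\norm{x(t)}^2$ (Lemma \ref{lema:derivada.norm.cua}), and then compares $h'$ with $g'$, where $g(t)=\norm{z(t)}^2$, through a rescaled comparison solution $y_t$ with $y_t(t)=x(t)$, concluding that $f=h/g$ is nondecreasing and tends to $1$ as $t\to 0$ by L'Hôpital. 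You instead reduce to a scalar Sturm-type comparison: Cauchy--Schwarz yields the differential inequality $\ddot u\geq\alpha u$ for $u=\norm{x}$ wherever $u>0$, and a two-step integrating-factor argument plus a continuation argument finishes the proof. Your route is more elementary and self-contained (no variational machinery, no minimization lemma), and it confronts explicitly the one delicate point -- that $\norm{x(t)}$ need not be differentiable at zeros of $x$ -- which you dispose of cleanly, since the conclusion $u\geq w>0$ on $(0,T^*)$ prevents $u$ from ever vanishing. The paper sidesteps this issue by working throughout with the everywhere-$C^2$ function $h=\norm{x}^2$ (the positivity of $h$ for $t>0$, needed to write $h=\lambda^2 g$, is implicit there and follows from $h'=2\,\calA(x_t)>0$), at the price of the two auxiliary lemmas; its variational flavor also matches the Lagrangian spirit of the rest of the paper. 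Two cosmetic remarks on your write-up: the expansion $x(t)=t\,\dot x(0)+O(t^3)$ would require $A$ to be $C^1$, but the Peano form $x(t)=t\,\dot x(0)+o(t^2)$, valid for $A$ merely continuous, is all you need to get $\dot u(0^+)=\norm{\dot x(0)}$ and $\dot\psi(t)\to 0$ as $t\to 0^+$; and $u$ is only $C^2$ (not smooth) where it is positive, which again suffices.
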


It is worth noting, as an interesting anecdote, that
periodic linear equations as the first one in (\ref{eq:Hill.equ})
are known as Hill's equations since they were first introduced
by George W. Hill in order to study the variations of
the Lunar perigee \cite{Hill}.

Despite the very basic character of Theorem \ref{thm:comparison},
we are not able to point a reference,
and for this reason, we provide the following one below.
Then we will be able to prove our main application, that is, that
any elliptic homographic motion by a strongly non-degenerate
central configurations is unstable.
The proof uses a couple of lemmas, for which we need
to introduce the corresponding Lagrangian functions
of systems (1) and (2) in \ref{eq:Hill.equ} above, that is,
\[
L(x,v,t)=\norm{v}^2+\inner{x}{A(t)x}
\quad\textrm{ and }\quad
L_0(x,v)=\norm{v}^2+\alpha\,\norm{x}^2
\,.
\]
The corresponding action functionals,
evaluated on a given curve $w\in C^1([0,t],\R^n)$
take the values
\[
\calA_0(w)=\int_0^t \norm{\dot w(s)}^2+\alpha\,\norm{w(s)}^2\;ds
\]
and
\[
\calA(w)=\int_0^t \norm{\dot w(s)}^2+\inner{x(s)}{A(s)x(s)}\;ds\,.
\]
We prove first that the solutions $z(t)$ of system (2)
such that $z(0)=0$ are minimizing of $\calA_0$
among the curves with the same endpoints.
Note that these solutions are of the form $z=r(t)\,u$,
for some $u\in\R^n$, and $r(t)$ a solution of $\ddot r=\alpha\,r$.

\begin{lemma}\label{lema:z.minimiza}
Let $z(s)$ be a solution of $\ddot x=\alpha x$ defined for
$s\in [0,t]$ and such that $z(0)=0$.If $w\in C^1([0,t],\R^n)$
is such that $w(0)=z(0)$ and $w(t)=z(t)$, then
\[
\calA_0(z)\leq \calA_0(w).
\]
\end{lemma}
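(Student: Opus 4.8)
The plan is to exploit the fact that the integrand of $\calA_0$ is a positive-definite quadratic form in $(w,\dot w)$, so that the functional is (strictly) convex and any extremal meeting the prescribed boundary conditions is automatically the global minimizer. Concretely, I would write an arbitrary competitor as $w=z+h$, where $h=w-z\in C^1([0,t],\R^n)$ satisfies $h(0)=h(t)=0$ since $w$ and $z$ share both endpoints. The whole argument then reduces to expanding $\calA_0(z+h)$, showing the first-order term vanishes, and observing the second-order term is nonnegative.

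For the expansion, substituting $w=z+h$ gives
\[
\calA_0(z+h)=\calA_0(z)+\int_0^t\left(2\inner{\dot z}{\dot h}+2\alpha\inner{z}{h}\right)ds+\int_0^t\left(\norm{\dot h}^2+\alpha\norm{h}^2\right)ds.
\]
The middle (linear) term is where the equation of motion enters. I would integrate $\int_0^t\inner{\dot z}{\dot h}\,ds$ by parts; since $z$ is smooth and $h$ is $C^1$ with $h(0)=h(t)=0$, the boundary contributions vanish and this integral equals $-\int_0^t\inner{\ddot z}{h}\,ds$. Using that $z$ solves $\ddot z=\alpha z$, this becomes $-\int_0^t\alpha\inner{z}{h}\,ds$, which exactly cancels the other half of the linear term. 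Hence the linear term vanishes identically — this is the first-variation (Euler--Lagrange) condition made explicit.

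It then remains to observe that the last integral is nonnegative: since $\alpha>0$, the integrand $\norm{\dot h}^2+\alpha\norm{h}^2$ is pointwise nonnegative, so $\calA_0(z+h)\geq\calA_0(z)$, with equality forcing $h\equiv 0$. This yields $\calA_0(z)\leq\calA_0(w)$, as claimed.

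I do not expect any genuine obstacle here: the computation is the textbook convexity argument for a quadratic action, and the positivity of $\alpha$ is precisely what makes it work — the same positivity that will later be inherited, via \eqref{eq:A(t).positive}, by the comparison matrix $A(t)$ in Theorem \ref{thm:comparison}. The only point needing mild bookkeeping is the integration by parts, but $z$, as a solution of a constant-coefficient linear ODE, is $C^\infty$, so transferring the derivative onto $z$ is legitimate for merely $C^1$ variations $h$.
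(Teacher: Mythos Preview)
Your proof is correct and takes a genuinely different route from the paper. The paper writes $z(s)=r(s)u$ with $\norm{u}=1$, decomposes an arbitrary competitor as $w(s)=\eta(s)u+v(s)$ with $v(s)\perp u$, drops the orthogonal part to get $\calA_0(w)\geq a_0(\eta)$ where $a_0$ is the one-dimensional action, and then invokes coercivity and uniqueness of the one-dimensional minimizer to conclude $a_0(\eta)\geq a_0(r)=\calA_0(z)$. Your approach bypasses this reduction entirely: by expanding $\calA_0(z+h)$ and killing the cross term via integration by parts and the equation $\ddot z=\alpha z$, you exhibit the minimality directly from the convexity of the quadratic integrand. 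This is shorter and more self-contained, since it avoids appealing to an abstract existence/uniqueness result for the one-dimensional problem; the paper's decomposition, on the other hand, highlights the role of the special form $z=r(s)u$, which is used again later in the proof of Theorem~\ref{thm:comparison}.
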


\begin{proof}
Let $z(s)=r(s)u$ be the solution defined in $[0,t]$
starting at $z(0)=0$,
were we assume that $\norm{u}=1$.
We decompose the curve $w$ as the sum
of a collinear to $z$ component and an orthogonal one.
Thus we write $w(s)=\eta(s)u+v(s)$, where $v(s)\in u^\perp$
for all $s\in [0,t]$. We have $\dot w(s)=\dot\eta(s)u+\dot v(s)$,
so in particular $\norm{\dot w(s)}^2\geq \dot\eta(s)^2$ as well as
$\norm{w(s)}^2\geq\eta(s)^2$. We deduce that
\[
\calA_0(w)\geq
\int_0^t\dot\eta(s)^2+\alpha\,\eta(s)^2\,ds\,
=a_0(\eta)
\]
where $a_0$ is the Lagrangian action for the one-dimensional
equation $\ddot r=\alpha\,r$. It is clear that the functional
$a_0$ is coercive, and that the minimization problem
has a unique solution once we fix the endpoints. Therefore
we have that
\[
a_0(\eta)\geq a_0(r)=\calA_0(z)\,.
\]
\end{proof}

\begin{lemma}\label{lema:derivada.norm.cua}
Let $x(t)$ be a solution of $\ddot x=A(t)x$
such that $x(0)=0$, and define the function
$h(t)=\norm{x(t)}^2$. Then, for each $t>0$ we have
$h'(t)=2\,\calA(x_t)$ where $x_t$ denotes the
restriction of the solution to the interval $[0,t]$.
\end{lemma}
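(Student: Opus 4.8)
The plan is to compute $h'(t)$ directly and then recognize that the remaining identity is exactly an integration by parts governed by the equation of motion. Since $h(t)=\inner{x(t)}{x(t)}$, differentiating gives
\[
h'(t)=2\,\inner{x(t)}{\dot x(t)}\,,
\]
so the whole statement reduces to proving the single identity $\calA(x_t)=\inner{x(t)}{\dot x(t)}$, where I use the action on the solution,
\[
\calA(x_t)=\int_0^t \norm{\dot x(s)}^2+\inner{x(s)}{A(s)x(s)}\;ds\,.
\]

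The key step is to integrate the kinetic term by parts. I would write
\[
\int_0^t \norm{\dot x(s)}^2\,ds
=\eval{\inner{x(s)}{\dot x(s)}}{0}^{\,t}
-\int_0^t \inner{x(s)}{\ddot x(s)}\,ds\,,
\]
and here the initial condition $x(0)=0$ is essential: it kills the boundary term at $s=0$, leaving only $\inner{x(t)}{\dot x(t)}$. I would then substitute the equation of motion $\ddot x(s)=A(s)\,x(s)$ into the remaining integral, turning $\int_0^t\inner{x}{\ddot x}\,ds$ into $\int_0^t\inner{x(s)}{A(s)x(s)}\,ds$. When I add back the potential term from $\calA(x_t)$, the two integrals cancel exactly, yielding $\calA(x_t)=\inner{x(t)}{\dot x(t)}$, and hence $h'(t)=2\,\calA(x_t)$.

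Honestly, there is no real obstacle here: the lemma is a one-line integration by parts, and the only point requiring a moment's care is remembering that the boundary contribution at the left endpoint vanishes precisely because of the hypothesis $x(0)=0$ (without it, an extra term $-2\inner{x(0)}{\dot x(0)}$ would survive and the clean formula would fail). I would state the computation in this order -- differentiate $h$, reduce to the action identity, integrate by parts, insert $\ddot x=A(t)x$ -- and keep it to a few lines.
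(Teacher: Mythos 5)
Your proof is correct and is essentially the paper's own argument: the paper writes $2\inner{x(t)}{\dot x(t)}=2\int_0^t\frac{d}{ds}\inner{x(s)}{\dot x(s)}\,ds$ (using $x(0)=0$) and expands the derivative via $\ddot x=A(s)x$, which is exactly your integration by parts read in the opposite direction. Same ingredients, same one-line computation.
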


\begin{proof} We have
\begin{eqnarray*}
h'(t)&=&\frac{d}{dt}\inner{x(t)}{x(t)}=
2\inner{x(t)}{\dot x(t)}\\
&=& 2\int_0^t\frac{d}{ds}\inner{x(s)}{\dot x(s)}ds\\
&=& 2\int_0^t
\norm{\dot x(s)}^2+\inner{x(s)}{A(s)x(s)}\,ds
\;=\;2\,\calA(x_t)\,.
\end{eqnarray*}
\end{proof}

We can prove now the Rauch type comparison theorem.

\begin{proof}[Proof of Theorem \ref{thm:comparison}]
We have a solution $x(t)$ of $\ddot x=A(t)x$, and a solution
$z(t)$ of $\ddot x=\alpha\,x$, satisfying $x(0)=z(0)=0$ and
$\dot x(0)=\dot z(0)\neq 0$. We define the functions
$h(t)=\norm{x(t)}^2$ and $g(t)=\norm{z(t)}^2$.
Since they are positive for $t>0$, we can write
$h(t)=\lambda(t)^2g(t)$ for a positive function $\lambda(t)$.

We can assume that $z(t)=\rho(t)u$, with $\norm{u}=1$,
and for each $t>0$, there is a unique $v(t)\in\R^n$,
with $\norm{v(t)}=1$, such that $y_t(s)=\lambda(t)\rho(s)v(t)$
is a solution of $\ddot x=\alpha\,x$
which satisfies $y_t(t)=x(t)$.
since the functional $\calA_0$ is homogeneous,
also that $\calA_0(y_t)=\lambda(t)^2\calA_0(z_t)$, where $z_t$ is the restriction
of $z$ to the interval $[0,t]$.

Let us also denote $x_t$ the restriction of $x$ to the interval $[0,t]$.
Because of the previous Lemmas \ref{lema:z.minimiza} and \ref{lema:derivada.norm.cua},
as well the fact that $\calA\geq \calA_0$, we have
\[
h'(t)=2\calA(x_t)\geq 2\calA_0(x_t)\geq 2\calA_0(y_t).
\]
Now, applying Lemma \ref{lema:derivada.norm.cua} to the matrix $A(t)=\alpha\, I$
we get
\[
2\calA_0(y_t)=2\lambda(t)^2\calA_0(z_t)=\lambda(t)^2g'(t).
\]
We conclude that for any $t>0$ the inequality $h'(t)\geq \lambda(t)^2 g'(t)$ holds.
Since we know that $h(t)=\lambda(t)^2g(t)$, we conclude that the function
$f(t)=h(t)g(t)^{-1}$, defined for $t>0$, has positive derivative. On the other hand,
by L'Hôpital's rule we know that $\lim_{t\to 0}f(t)=1$. Therefore we
have proved that $f(t)\geq 1$ for all $t>0$, which says that
\[
\norm{x(t)}\geq \norm{z(t)}
\]
for all $t>0$ as we wanted to prove.
\end{proof}

\subsection{Proof of Theorem \ref{thm:StrongND->unstable}}

We will show that given a periodic homographic motion
of the planar $N$-body problem by a strongly non-degenerate
configuration is linear unstable.
Actually, as Ou in \cite{Ou},
we prove first that the essential part of the Meyer-Schmidt
decomposition is hyperbolic

\begin{proof}
Let $x_0$ be a strongly non-degenerate central configuration,
and let $x(t)$ be a homographic periodic motion, of period $T>0$,
and such that $x(0)=x(T)=x_0$.
The linearization of the Euler-Lagrange flow over $x(t)$
is given by system (\ref{eq:linear.E-L}),
in $E^N\times E^N$ for which the space $\calD=D\times D$
is known to be invariant.
Recall that $D=(K+\Delta)^\perp$ where $K=\C\,x_0$.
Let us write the restriction
of this system to $\calD$ as the linear periodic system
\begin{equation}
\label{eq:linearEL.restricted.D}
\dot X=A(t)\,X
\end{equation}
where
\[
X=(J,V)\,,\quad
A(t)=
\left(\;
\begin{array}{cc}
     0&I  \\
     B(t)&0 
\end{array}\;
\right),
\]
and $B(t)$ is the transformation matrix of the
restriction of $HU_{x(t)}$ to subspace $D$.
Let us also call $M=\phi_0(T)$ the monodromy matrix
of this system. Then, for any Jacobi field $J$ in $D$
and any $n\in\Z$ we have
\[
(J(n),\dot J(n))=M^n(J(0),\dot J(0)).
\]
On the other hand we know, due to Theorem \ref{thm:comparison}
combined with the Lemma \ref{lema:HUx(t)>alpha.x^2},
that any non-trivial solution of equation (\ref{eq:linearEL.restricted.D})
with initial condition in
\[
\calD_0=\set{X=(0,V)\mid V\in D}
\]
has exponential growth, both in the past and in the future.
More precisely, if $X=(0,V)\in \calD_0$ and $V\neq 0$, and we
denote $X(t)=(J(t),\dot J(t))$ the solution of (\ref{eq:linearEL.restricted.D})
with initial condition $J(0)=0$ and $\dot J(0)=V$, then
$J(t)$ is a solution of the second order differential equation
$\ddot J=B(t)J$. By Lemma \ref{lema:HUx(t)>alpha.x^2}, we
know that there is $\alpha>0$ such that
\[
\inner{B(t)x}{x}\geq \alpha\norm{x}^2
\]
for all $x\in D$. This allow us to apply Theorem \ref{thm:comparison}
and conclude that, for $t>0$,
\begin{equation}
\label{eq:normJ.geq.normZ}
\norm{J(t)}\geq\norm{Z(t)}    
\end{equation}
where $Z(t)$ is the solution of $\ddot Z=\alpha Z$ with
the same initial conditions, that is
\[
Z(t)=\frac{1}{2}\,
(e^{\sqrt{\alpha}t}-e^{-\sqrt{\alpha}t})\,V.
\]
Since $J(-t)$ is the solution of $\ddot J=B(t)J$ with
initial conditions $J(0)=0$ and $\dot J(0)=-V$,
and $Z(-t)=-Z(t)$, we have in fact that that
(\ref{eq:normJ.geq.normZ}) holds for all $t\in\R$.

Therefore we conclude that, if $X\in \calD_0$ and $X\neq 0$,
then $M^nX$ has exponential growth for $n\to \pm\infty$.
Let us call $d=\dim D$. By grouping generalized eigenspaces of $M$,
we do the decomposition
\[
\calD=E^s\oplus E^c \oplus E^u
\]
such that
\begin{eqnarray*}
    \lim_{n\to +\infty}M^nX&=&0\quad\textrm{ if }X\in E^s,\\
    &&\\
    \lim_{n\to -\infty}M^nX&=&0\quad\textrm{ if }X\in E^u,
\end{eqnarray*}
and
\[
\norm{M^nX}\leq\;\rho\,\abs{n}^{2d-1}\norm{X}
    \quad\textrm{ for some }\rho>0
    \quad\textrm{ if }X\in E^c.
\]
Since $\calD_0\cap (E^c\oplus E^u)=0$, we get that
$\dim E^s\geq \dim \calD_0=d$.
We also have $\calD_0\cap (E^s\oplus E^c)=0$, hence
that $\dim E^u\geq \dim\calD_0$. Since $\dim\calD=2d$,
we conclude that $\dim E^s=\dim E^u=d$ and that $E^c=0$,
showing that $M$ is hyperbolic.

The restriction of the
linearized Euler-Lagrange equation (\ref{eq:linear.E-L})
to the invariant subspace $\calD$ is then hyperbolic.
While it is classical to deduce from this that the periodic
orbit is unstable, we give the argument below
for the sake of completeness.

We choose a Poincaré section $\Sigma$
for the periodic orbit $(x(t),\dot x(t))$ in $T\Omega$,
at the point $p=(x_0,\dot x_0)$,
in such a way that $\set{p}\times W\subset\Sigma$
where $W$ is a neighbourhood of $0$ in $\calD$.
Let us call $g$ the corresponding first recurrence map,
defined in a neighbourhood of $p$ in $\Sigma$, so $g(p)=p$.
Since $\calD$ is invariant for the linearization of the flow,
it is easy to see that for all $v\in\calD$ we have 
\[
dg_p(v)=d\varphi^T_p(v)=Mv\,,
\]
see for instance Proposition 3, Chapter 13
in \cite{HirschSmale}. The eigenvalues of $dg_p\mid_\calD$
are then of modulus greater than or less than $1$,
which shows the instability.
\end{proof}
\vs

\subsection{On the Theorem of Hu and Ou}
\label{sec:definitions.StrongND.equivalent}
As we said in the introduction,
Hu and Ou obtained in \cite[Theorem 1.4]{HuOu}
the hyperbolicity of the essential part of the linearized flow,
just as we did in the preceding proof, using index theory.
Again, the result is obtained there by means
of a hypothesis about the central configuration,
which they call \emph{strong minimization}.
We shall now see that, in fact, this definition
is completely equivalent to the one that serves
as the hypothesis for Theorem 1.2,
which we have called \emph{strong non-degeneracy}.

Let us write $E=\R^2$, and let $a\in E^N$
be a normal planar central configuration,
\[
a\in S=\set{x\in E^N_0\mid\norm{a}=1}.
\]
So we are considering only centered configurations.
Let us write, as in the reference, $D^2(U|_S)$
for the Hessian operator of the restriction of the
Newtonian potential to the unit sphere $S$.
It is clear that $U$ is constant on the circle $S\cap K$,
composed by rotations of configuration $a$.
Let us call $L\subset T_aS$ the tangent at $a$
of this circle. Thus trivially $L$ is in the kernel
of $D^2(U|_S)(a)$. A minimal configuration
is a configuration which minimizes $U|_S$,
so in particular it is a central configuration.
We say that a minimal $a$ is non-degenerate
if $\ker D^2(U|_S)(a)=L$.

\begin{definition}
\label{def:strong.minimizer}
A planar central configuration is said to be
a \emph{strong minimizer} if it is non-degenerate
and all the non-zero eigenvalues of $D^2(U|_S)$
satisfy $\lambda>U(a)$.
\end{definition}

\begin{proposition}
A planar central configuration is a strong minimizer
if and only if it is strongly non-degenerate.
\end{proposition}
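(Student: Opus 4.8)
The plan is to express the intrinsic Hessian $D^2(U|_S)(a)$ in terms of the ambient Hessian endomorphism $HU_a$, and then to compare the two definitions eigenvalue by eigenvalue on the subspace $D$. First I would record that, by (\ref{eq:cc.nablaU//x}), the normalized central configuration $a$ is a critical point of $U|_S$ with $\nabla U(a)=\lambda\,a$, and compute the multiplier: since $\norm{a}=1$ forces $I(a)=1$, Euler's relation gives $\lambda=-\kappa\,U(a)$, hence $\lambda=-U(a)$ in the Newtonian case $\kappa=1$. The standard Lagrange-multiplier formula for the Hessian of a function restricted to the level set $\set{\norm{x}^2=1}$ then reads, for tangent vectors $u,v\in T_aS$,
\[
D^2(U|_S)(a)(u,v)=d^2U_a(u,v)-\lambda\inner{u}{v}=\inner{(HU_a-\lambda\,\mathrm{Id})u}{v}.
\]

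Next I would analyze the action of $HU_a$ on the splitting $T_aS=L\oplus D$, where $L=\R\,(ia)$ is the tangent to the circle $S\cap K$. Here I use that $HU_a$ preserves each of $\Delta$, $K$ and $D$ (the splitting lemma applied to application (iii)), and that the rotational invariance of $U$ yields the equivariance $\nabla U(e^{i\theta}a)=\lambda\,e^{i\theta}a$; differentiating at $\theta=0$ gives $HU_a(ia)=\lambda\,(ia)$. Consequently $(HU_a-\lambda\,\mathrm{Id})$ annihilates $L$ and preserves $D$, so the operator $D^2(U|_S)(a)$ is block diagonal with respect to $L\oplus D$: it vanishes on $L$ (confirming $L\subset\ker D^2(U|_S)(a)$, as already noted in the text), and it coincides with $(HU_a-\lambda\,\mathrm{Id})|_D$ on $D$. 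In particular, the nonzero eigenvalues of $D^2(U|_S)(a)$ are exactly the eigenvalues of $(HU_a-\lambda\,\mathrm{Id})|_D$, apart from the kernel contributed by $L$.

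Finally I would translate both definitions into a single statement about $D$. The strong-minimizer condition of Definition \ref{def:strong.minimizer}, namely non-degeneracy ($\ker D^2(U|_S)(a)=L$) together with the requirement that every nonzero eigenvalue exceed $U(a)$, is equivalent to asserting that every eigenvalue of $(HU_a-\lambda\,\mathrm{Id})|_D$ is strictly larger than $U(a)$, i.e. that every eigenvalue of $HU_a|_D$ exceeds $\lambda+U(a)$. Since $\lambda+U(a)=0$, this says precisely that $HU_a|_D$ is positive definite; because $S_{x_0}^\perp=D$, this is exactly the strong non-degeneracy of Definition \ref{def:cc.strong.ND}. As every step in this chain is a genuine equivalence, both implications of the proposition follow at once.

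The step where care is needed is the bookkeeping of the constant: one must verify that the threshold $U(a)$ appearing in the strong-minimizer condition matches the Lagrange multiplier so that the exact cancellation $\lambda+U(a)=0$ occurs. This is also why the sphere $S$ is taken inside $E^N_0$ rather than in all of $E^N$: restricting to centered configurations removes the directions $\Delta\subset\ker HU_a$, which would otherwise contribute spurious eigenvalues equal to $U(a)$ and spoil the strict inequality. Once this cancellation and the choice of ambient sphere are checked, the remaining arguments are the routine Hessian-of-restriction computation and the eigenvector identification on $K$.
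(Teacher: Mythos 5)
Your proof is correct and follows essentially the same route as the paper: both arguments rest on the identity $D^2(U|_S)_a=(HU_a+U(a)\,\mathrm{Id})|_{T_aS}$ together with the orthogonal splitting $T_aS=L\oplus D$ (with $D$ invariant by the splitting lemma), on which the cancellation $\lambda+U(a)=0$ turns the strong-minimizer condition into positive definiteness of $HU_a|_D$. The only difference is cosmetic — you obtain the identity from the classical second-order Lagrange-multiplier formula on the level set $\norm{x}=1$, whereas the paper differentiates the ambient extension $Z(x)=\nabla U(x)+U(x)\,x$ of $\nabla(U|_S)$ and uses $Z(a)=0$ to avoid the tangential projection; the two computations are equivalent.
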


\begin{proof}
Recall that $K=\C\,a$ is the space of configurations
similar to $a$, and thus it is generated by $a$ and $i\,a$.
Hence $K = \R\,a + L$ where $L = \R\,i\,a$, and
$L\subset T_aS=a^\perp$.

On the other hand $D$ is the orthogonal to $K$
inside $E^N_0$, or if we want, $D$ is the orthogonal
to $K+\Delta$ in $E^N$. Therefore we have that $T_aS=L+D$.

Now we observe that for any $x\in S$,
\begin{eqnarray*}
\nabla(U|_S)(x)&=&\nabla U(x)^T=
\nabla U(x)-\inner{\nabla U(x)}{x}\,x\\
&=&\nabla U(x)+U(x)\,x\,,
\end{eqnarray*}
the last equality being due to the fact that
the Newtonian potential is homogeneous of degree $-1$.
The vector field $Z(x)=\nabla U(x)+U(x)\,x$
is a smooth extension of $\nabla(U|_S)$
to the ambient space $\Omega$.
Therefore, the Hessian operator of $U|_S$
can be computed as the tangent component
of the derivative of $Z$ in $\Omega$. That is,
\[
D^2(U|_S)_x(v)=(\nabla_vZ)^T(x).
\]
for $x\in S$ and $v\in T_x S$.
Since $a$ is a central configuration, we have $Z(a)=0$,
hence $\nabla_vZ(a)$ is already tangent\footnote{\;
If a vector field $V$ is tangent to an
immersed submanifold $S\subset M$,
then at each point $x\in S$
the difference between its covariant derivatives,
in the ambient space and in the submanifold,
is the value of the second fundamental form
applied to $V(x)$.
Therefore the covariant derivatives agree at each point
where $V=0$.
In our case we also can see that $\nabla_vZ(a)\in T_aS$
for each $v\in T_aS$, and moreover that the
subspace $D\subset T_aS=a^\perp$ is
invariant for $D^2(U|_S)_a$ as a consequence of equality
(\ref{equa:nabla.v.Z}), Lemma \ref{lemma:splitting}
and the fact that $\nabla_vU(a)=0$ for $v\in T_aS$.}
to $S$.
On the other hand, we have
\begin{equation}
\label{equa:nabla.v.Z}
\nabla_vZ(x)=\nabla_v\nabla U(x)\;+\;\nabla_vU(x)\,x\,+\,U(x)v\,.
\end{equation}
Again, since $a$ is a central configuration, for $v\in T_aS$
we have $\nabla_vU(a)=0$, thus
\begin{equation*}
\label{equa:D2U.vs.HU}    
D^2(U|_S)_a(v)=HU_a(v)+U(a)v\,.
\end{equation*}
Therefore $a$ is a strong minimizer,
that is $D^2(U|_S)|_D > U(a)I$, if and only if
$a$ is a strongly non-degenerate central configuration,
that is, if $HU_a|_D > 0$.
\end{proof}

\section{The orthogonal to the space of equilateral triangles}

In this last section we deal with the planar three-body problem,
and in particular we compute the orthogonal space of
Lagrange equilateral configurations.

To do this we will see that in general, for planar configurations of $N$ bodies,
the mass inner product is nothing but the real part
of an \emph{Hermitian mass inner product} for which,
two plane configurations are orthogonal if and only if
the complex lines they generate are orthogonal real planes
for the usual mass inner product.

\subsection{The complex mass inner product}

We start by identifying, as before, the space of
configurations $E^N$ with the complex vector space $\C^N$.
Therefore multiplying a configuration by $i=\sqrt{-1}$
corresponds exactly to rotating it by $\pi/2$
in a direction that we will call positive.

\begin{definition}[Complex mass inner product]
\label{def:complex.mass.inner.product}
Given configurations $x=(z_1,\dots,z_N)$
and $y=(w_1,\dots,w_N)$, we define their complex product as
\[
\cinner{x}{y}\; = \inner{x}{y}+i\,\inner{x}{iy}=
m_1z_1\cj{w_1}+\dots+m_Nz_N\cj{w_N}.
\]
\end{definition}

\begin{remark}
    If the masses are equal, this is
    the standard scalar product of $\C^N$.
    With this product, we have
    $\cinner{x}{y}=0$ if and only if $\inner{x}{z\,y}=0$
    for any $z\in\C$.
\end{remark}

We know that the space $E^N_0$ of configurations with
center of mass at $0$, that is $\ker G$, is precisely
the orthogonal of $\Delta\subset E^N$ for the real mass inner product,
see definitions \ref{def:total.coll} and \ref{def:centered.conf}.
On the other hand, as a complex subspace of $E^N$, we have that
$\Delta=\left<{\bf 1}\right>$, where
${\bf 1}=\delta(1)=(1,\dots,1)$.
Therefore we have that a configuration $x\in E^N$
is centered, if and only if $\cinner{x}{{\bf 1}}\;=\;0.$

\subsection{The space of positive and centered equilateral triangles}
We consider now the case $N=3$. Let us define the configurations
\[
T=(1,\omega,\omega^2)\,,
\]
where $\omega=-(1/2)+(\sqrt{3}/2)i$ is a cube root of unity, and
\[
T_0=(1-c,\omega-c,\omega^2-c)=T-c\,{\bf 1}\,,
\]
where $c=G(T)$ is the center of mass of the equilateral configuration $T$.
The space of positive and centered equilateral triangles is therefore
\[
K=\C\,T_0
\]
and as before we define
\[
D=(K+\Delta)^\perp=
(\C\,T+\C\,{\bf 1})^\perp.
\]
Note that $\C\,T_0+\C\,{\bf 1}=\C\,T+\C\,{\bf 1}$
because $T-T_0\in\Delta$. It follows that
\[
\C^3\simeq E^3=\Delta\oplus K\oplus D
\]
and these three spaces are orthogonal complex lines.
We will call $D$ the deformation space.
In the following we will denote $q=(x,y,z)\in\C^3$
as a generic configuration, and $\C^3_0$ the
space of centered configurations, that is,
\[
\C^3_0=\set{q=(x,y,z)\in\C^3\mid m_1x+m_2y+m_3z=0}.
\]
We can say that $D$ is the orthogonal of $K$ inside $\C^3_0$.
Moreover, since $K+\Delta$ is spanned by configurations $T$ and ${\bf 1}$,
we have
\[
D=\set{q=(x,y,z)\in\C^3\mid\;\;
\cinner{q}{(1,1,1)}\,=0
\;\;\textrm{ and }
\cinner{q}{(1,\omega,\omega^2)}\,=0}
\]
which allows us to verify the following theorem.

\begin{theorem}
    Given $m_1, m_2, m_3>0$, the orthogonal complement of
    the space of all positive equilateral triangles
    is the complex line generated by the configuration
    \[S=(\,m_2m_3\;,\;m_1m_3\,\omega^2\;,\;m_1m_2\,\omega\,)\,.\]
\end{theorem}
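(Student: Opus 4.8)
The plan is to verify directly that the configuration $S$ lies in the deformation space $D=(K+\Delta)^\perp$, and then to invoke a dimension count to conclude $D=\C\,S$. Recall from the preceding discussion that the space of all positive equilateral triangles is exactly $K+\Delta=\C\,T+\C\,{\bf 1}$, a two-dimensional complex subspace of $\C^3$ spanned by $T=(1,\omega,\omega^2)$ and ${\bf 1}=(1,1,1)$. Since $\dim_\C\C^3=3$, its orthogonal complement $D$ for the complex mass inner product is a complex line, characterized by the two conditions
\[
\cinner{q}{{\bf 1}}=0\quad\textrm{ and }\quad \cinner{q}{T}=0.
\]
Thus it suffices to check that the proposed generator $S$ satisfies both, and that $S\neq 0$.

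For the first condition, I would compute $\cinner{S}{{\bf 1}}$ directly from Definition \ref{def:complex.mass.inner.product}. Each of the three terms carries the common factor $m_1m_2m_3$, so the sum collapses to $m_1m_2m_3\,(1+\omega+\omega^2)$, which vanishes because $\omega$ is a primitive cube root of unity. For the second condition, I would compute $\cinner{S}{T}$; here the key point is that $\cj{\omega}=\omega^2$ and $\cj{\omega^2}=\omega$, so the products $\omega^2\,\cj{\omega}=\omega^2\cdot\omega^2=\omega^4=\omega$ and $\omega\,\cj{\omega^2}=\omega\cdot\omega=\omega^2$ appear, and again the sum reduces to $m_1m_2m_3\,(1+\omega+\omega^2)=0$. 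Hence $S\in D$.

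Finally, since all masses are positive the first coordinate $m_2m_3$ of $S$ is nonzero, so $S\neq 0$; as $D$ is a one-dimensional complex subspace containing the nonzero vector $S$, we conclude $D=\C\,S$, which is precisely the claimed orthogonal complement. There is no genuine obstacle here beyond the bookkeeping with the root $\omega$: the entire content is the identity $1+\omega+\omega^2=0$ together with the conjugation rule $\cj{\omega}=\omega^2$, so the only point requiring care is the conjugate-linearity of $\cinner{\cdot}{\cdot}$ in its second slot when verifying orthogonality to $T$.
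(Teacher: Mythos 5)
Your proposal is correct and follows essentially the same route as the paper: the paper characterizes $D$ by the two conditions $\cinner{q}{{\bf 1}}=0$ and $\cinner{q}{T}=0$ and leaves the verification that $S$ satisfies them implicit, which is exactly the computation you carry out (correctly, including the conjugation rule $\cj{\omega}=\omega^2$), together with the dimension count $\dim_\C D=1$ that makes $\C\,S=D$. Nothing further is needed.
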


\begin{corollary}
    If the masses are equal we have the orthogonal decomposition
    \[
    \C^3\simeq E^3=\Delta\oplus K\oplus K^*
    \]
    where $K^*$ denotes the space of centered and negatively oriented
    equilateral triangles.
\end{corollary}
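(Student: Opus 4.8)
The plan is to use the explicit characterization of $D$ obtained just before the statement, namely
\[
D=\set{q\in\C^3\mid\;\cinner{q}{{\bf 1}}=0\;\textrm{ and }\;\cinner{q}{T}=0}\,,
\]
where ${\bf 1}=(1,1,1)$ and $T=(1,\omega,\omega^2)$. Since $\omega\neq 1$, the configurations ${\bf 1}$ and $T$ are linearly independent over $\C$, so $K+\Delta=\C\,{\bf 1}+\C\,T$ is a two-dimensional complex subspace of $\C^3$, and therefore its complex-orthogonal complement $D$ is a complex line. Consequently it suffices to exhibit one nonzero configuration lying in $D$; I claim that $S=(m_2m_3,\,m_1m_3\,\omega^2,\,m_1m_2\,\omega)$ is such a configuration. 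As the masses are strictly positive we have $S\neq 0$, so establishing $S\in D$ immediately gives $D=\C\,S$.

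The verification reduces to the two computations $\cinner{S}{{\bf 1}}=0$ and $\cinner{S}{T}=0$. Both are short and rely only on two elementary facts: that $\omega$ is a primitive cube root of unity, so $1+\omega+\omega^2=0$, and that conjugation acts by $\cj{\omega}=\omega^2$ and $\cj{\omega^2}=\omega$. For the first inner product, each of the three terms $m_iS_i$ carries the common factor $m_1m_2m_3$, and one gets $m_1m_2m_3\,(1+\omega^2+\omega)=0$. For the second, the conjugates $\cj{T_i}$ combine with the powers of $\omega$ appearing in $S$ so that, after using $\omega^2\,\cj{\omega}=\omega^4=\omega$ and $\omega\,\cj{\omega^2}=\omega^2$, one again obtains $m_1m_2m_3\,(1+\omega+\omega^2)=0$. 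These two identities place $S$ in $D$. (Alternatively, $S$ can be found rather than guessed, by solving the $2\times 2$ complex linear system defining $D$; the ratios of the weighted coordinates $m_iS_i$ are forced to be $1:\omega^2:\omega$, which recovers the stated $S$.)

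The only conceptual point worth isolating — and it is not really an obstacle — is why \emph{complex} orthogonality to ${\bf 1}$ and $T$ suffices to conclude orthogonality, in the real mass inner product, to the entire real plane $K+\Delta$. This is exactly the content of the remark following Definition \ref{def:complex.mass.inner.product}: $\cinner{x}{y}=0$ holds if and only if $\inner{x}{z\,y}=0$ for every $z\in\C$, so the vanishing of $\cinner{S}{{\bf 1}}$ and $\cinner{S}{T}$ is equivalent to the real-orthogonality of $S$ to all rotations and rescalings of ${\bf 1}$ and $T$, that is, to all of $K+\Delta=(\C\,T+\C\,{\bf 1})$. With this identification the argument is complete, and the corollary for equal masses follows at once by noting that when $m_1=m_2=m_3$ the configuration $S$ is a scalar multiple of $(1,\omega^2,\omega)$, the negatively oriented equilateral triangle spanning $K^*$.
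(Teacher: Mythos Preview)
Your proof is correct and follows exactly the route the paper has in mind: the paper sets up the characterization $D=\set{q\mid\cinner{q}{{\bf 1}}=0,\ \cinner{q}{T}=0}$ and states the preceding theorem and this corollary as direct verifications, which you have carried out in full. The only thing you have added beyond the paper is the explicit check of the two Hermitian products, and your observation that for equal masses $S$ is a scalar multiple of $(1,\omega^2,\omega)$ is precisely how the corollary is meant to fall out.
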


For what follows,
we have to compute the three distances of configuration $S$ given
in the previous theorem.
Considering that the positions have
arguments that differ by $2\pi/3$, it is easy to find the distances
between the bodies.
We get that the squares of the distances in $S=(r_1,r_2,r_3)$ are
\[
\begin{array}{cc}
r_{12}^2(S)= &m_1^2m_3^2+m_2^2m_3^2+m_1m_2m_3^2\\
&\\
r_{23}^2(S)= &m_2^2m_1^2+m_3^2m_1^2+m_2m_3m_1^2\\
&\\
r_{31}^2(S)= &m_3^2m_2^2+m_1^2m_2^2+m_3m_1m_2^2
\end{array}
\]
which can also be written as
\[
r_{ij}^2=
m_i^2m_k^2+m_j^2m_k^2+m_im_jm_k^2=
m_k^2(m_i^2+m_j^2+m_im_j)\,.
\]

\begin{figure}[h]
    \centering
        \begin{tikzpicture}
        \filldraw[fill=cyan!20, line width=1pt, draw=black]
        (2.8,0)--(-1.16,2)--(-0.9,-1.56)--(2.8,0);
        \draw[red] (0,0) circle (3);
        \draw[thick] (0,0)--(4,0);
        \draw[thick] (0,0)--(-2,3.464);
        \draw[thick] (0,0)--(-2,-3.464);
        \draw[fill] (3,0) circle (0.8pt);
        \draw (3,0) node[anchor=north west] {$1$};
        \draw[fill] (-1.5,2.6) circle (0.8pt);
        \draw (-1.7,2.7) node[anchor=east] {$\omega$};
        \draw[fill] (-1.5,-2.6) circle (0.8pt);
        \draw (-1.7,-2.7) node[anchor=east] {$\omega^2$};
        \draw[fill] (2.8,0) circle (1.6pt);
        \draw (3.5,0.1) node[anchor=south] {$r_1=m_2m_3$};
        \draw[fill] (-1.16,2) circle (1.6pt);
        \draw (-1.16,2.2) node[anchor=west] {$r_3=m_1m_2\,\omega$};
        \draw[fill] (-0.9,-1.56) circle (1.6pt);
        \draw (0.2,-1.56) node[anchor=north] {$r_2=m_1m_3\,\omega^2$};
        \draw[line width=2] (0,0)--(-1.12,1.94);
        \draw[line width=2] (0,0)--(2.8,0);
        \draw[line width=2] (0,0)--(-0.9,-1.56);
        \end{tikzpicture}
    \caption{In red is the unit circle, along with the three
    cube roots of unity. In light blue is the negatively
    oriented and centered triangle $S=(r_1,r_2,r_3)$ which
    is orthogonal, for the mass inner product,
    to all positive equilateral triangles.}
    \label{fig:orthogonal.to.equilateral}
\end{figure}
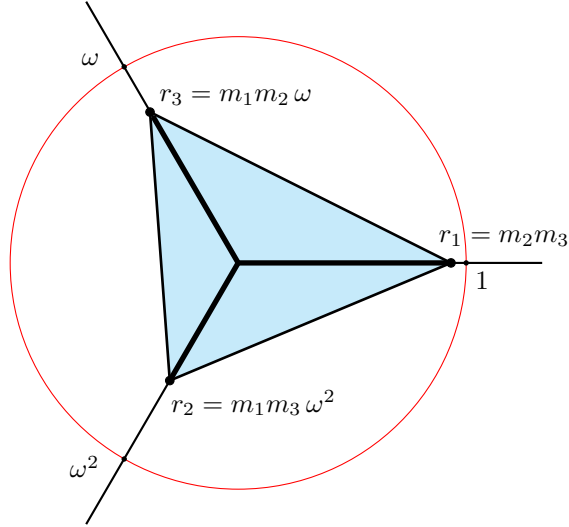

\subsection{The Hessian operator}

We recall that we are using the following notation.
The Newtonian potential for the $N$-body problem
in a Euclidean space $E=\R^d$ is
$U(x)=\sum_{i<j}m_im_j\,r_{ij}^{-1}$,
where $x=(r_1,\dots,r_N)\in E^N$ is the configuration
whose components are the positions of the bodies,
and $r_{ij}=\norm{r_i-r_j}_E$ denotes the Euclidean
distance between the bodies at positions $r_i$ and $r_j$.

We have that
\[
\frac{\partial U}{\partial r_i}(x)=
\sum_{h\neq i}m_im_h\,r_{ih}^{-3}(r_h-r_i)
=F_i(x)
\]
is the force acting on the particle at position $r_i$,
since for any $h\neq i$
\[
\frac{\partial}{\partial r_i}\,\norm{r_i-r_h}_E=
(r_i-r_h)/\norm{r_i-r_h}_E=
r_{ih}^{-1}(r_i-r_h)\,.
\]
In particular, if $\dim E=d$ and we use some orthonormal basis
of $E$ to get the identification $E\sim\R^d$, then
the force acting on the particle at position $r_i$ can be written
as $F_i=(F_i^1,\dots,F_i^d)$, where each component is
\[
F_i^k(x)=
-\sum_{h\neq i}m_im_h\,r_{ih}^{-3}(r_h^k-r_i^k)=
\frac{\partial U}{\partial r_i^k}(x)\,.
\]

To compute the second derivatives we write
\[
A_{ij}(x)=
\frac{\partial^2U}{\partial r_i\partial r_j}(x)
=\;\frac{\partial}{\partial r_j}\,\left(
\sum_{h\neq i}m_im_h\,r_{ih}^{-3}(r_h-r_i)
\right)
\]
so if $x=(r_1,\dots,r_n)$ and $u\in E$, for any $j\neq i$ we get
\begin{eqnarray*}
A_{ij}(x)(u)&=& m_im_j\,\frac{\partial}{\partial r_j}
\left(
\,r_{ij}^{-3}\,(r_j-r_i)
\right)(u)\\
&=& m_im_j
\left(\,
-3\,r_{ij}^{-5}<\,(r_j-r_i)\,,u>\,(r_j-r_i) \,+\, r_{ij}^{-3}\,u
\,\right)\,,
\end{eqnarray*}
instead, when $j=i$ we have 
\begin{eqnarray*}
A_{ii}(x)(u)&=& \sum_{h\neq i}m_im_h\,\frac{\partial}{\partial r_i}
\left(
\,r_{ih}^{-3}\,(r_h-r_i)
\right)(u)\\
&=& \sum_{h\neq i}m_im_h\,
\left(\,
3\,r_{ih}^{-5}<\,(r_h-r_i)\,,u>\,(r_h-r_i) \,-\, r_{ih}^{-3}\,u
\,\right)\\
&=& \sum_{j\neq i}\, -\,A_{ij}(x)(u)\,.
\end{eqnarray*}

Using an orthonormal basis of $E$ and performing the identification
$E\sim\R^d$, we obtain the following matrix representation
for the endomorphisms $A_{ij}(x)$.
\[
A_{ij}(x)=(\,a_{ij}^{kl}(x)\,)_{k,l=1\dots d}
\]
where we have that, for $j\neq i$,
\[
a_{ij}^{kl}(x)=m_im_j\,
(-3\,r_{ij}^{-5}\,(r_j^k-r_i^k)(r_j^l-r_i^l)\,+\, r_{ij}^{-3}\,\delta_{kl})
\]
as well as that
\[
a_{ii}^{kl}(x)=\sum_{j\neq i}\, -\,a_{ij}^{kl}(x).
\]
\subsection{The Hessian operator of an equilateral configuration}

We now begin by computing, for the case $N=3$ and $\dim E=2$,
the matrix $D^2U$ over the equilateral triangle $T=(1,\omega,\omega^2)$,
of which we know that its sides measure $\sqrt{3}$.
This matrix will be the same as for the centered triangle $T_0$,
since the Newtonian potential is translation-invariant,
which is observed in the exclusive dependence on the
relative positions of the previously found matrix.

To simplify the notation we write
\[
D^2U_{\,T}=
\left(
\begin{array}{ccc}
    -(B+C) & B & C \\
    B & -(B+D) & D \\
    C & D & -(C+D)
\end{array}\right)
\]
where
\[
B=A_{12}(T),\quad C=A_{13}(T)\quad\textrm{ and }\quad
D=A_{23}(T),
\]
and we compute the symmetric matrices $B$, $C$ and $D$.
We note first that the relative positions $r_{ij}$
in the equilateral triangle $T$ are
\[
\begin{array}{ccccccccc}
r_2-r_1&=&\omega-1&=&(&-3/2&,&\sqrt{3}/2&)\\
r_3-r_1&=&\omega^2-1&=&(&-3/2&,&-\sqrt{3}/2&)\\
r_3-r_2&=&\omega^2-\omega&=&(&0&,&-\sqrt{3}&)    
\end{array}
\]
and substituting in the formulas obtained it turns out that
{\footnotesize
\[
B=
\frac{m_1m_2}{12\sqrt{3}}
\left(
\begin{array}{cc}
    -5 & 3\sqrt{3} \\
     3\sqrt{3} & 1
\end{array}
\right)
,\;\;
C=
\frac{m_1m_3}{12\sqrt{3}}
\left(
\begin{array}{cc}
    -5 & -3\sqrt{3} \\
    -3\sqrt{3} & 1
\end{array}
\right)
\textrm{ and }
D=
\frac{m_2m_3}{12\sqrt{3}}
\left(
\begin{array}{cc}
    4 & 0 \\
    0 & -8
\end{array}
\right)
\]}

Therefore, according to the considerations made in Section
\ref{ssec:second.derivative.HU}, and in particular
the matrix expression (\ref{eq:matrix.HU})
for the Hessian endomorphism, we have that
\vspace{.5cm}

\[
\textrm{HU}_{\,T}=
\left(
\begin{array}{ccc}
    -\frac{(B+C)}{m_1} & \frac{B}{m_1} & \frac{C}{m_1} \\
    &&\\
    \frac{B}{m_2} & -\frac{(B+D)}{m_2} & \frac{D}{m_2} \\
    &&\\
    \frac{C}{m_3} & \frac{D}{m_3} & -\frac{(C+D)}{m_3}
\end{array}\right)
\]
\vspace{.5cm}

\noindent
from which we deduce that the matrix
$A=12\sqrt{3}\;\textrm{HU}_{\,T}$ is
\vspace{.5cm}
{\tiny
\[A=\left(
\begin{array}{cccccc}
5(m_2+m_3) & -3\sqrt{3}(m_2-m_3) & -5m_2 & 3\sqrt{3}m_2 & -5m_3 & -3\sqrt{3}m_3 \\
&&&&&\\
-3\sqrt{3}(m_2-m_3) & -(m_2+m_3) & 3\sqrt{3}m_2 & m_2 & -3\sqrt{3}m_3 & m_3  \\
&&&&&\\
-5m_1 & 3\sqrt{3}m_1 & 5m_1-4m_3 & -3\sqrt{3}m_1 & 4m_3 & 0  \\
&&&&&\\
3\sqrt{3}m_1 & m_1 & -3\sqrt{3}m_1 & -m_1+8m_3 & 0 & -8m_3 \\
&&&&&\\
-5m_1 & -3\sqrt{3}m_1 & 4m_2 & 0 & 5m_1-4m_2 & 3\sqrt{3}m_1 \\
&&&&&\\
-3\sqrt{3}m_1 & m_1 & 0 & -8m_2 & 3\sqrt{3}m_1 & -m_1+8m_2
\end{array}
\right)
\]}
\vspace{.3cm}

This result fully coincides with that found by Moeckel in
\cite[p. 302]{Moeckel} for the linear stability analysis
of the equilateral relative equilibrium.
In this work Moeckel computes the matrix $M^{-1}D\nabla U(T)$
which of course, is exactly our matrix $\textrm{HU}_T$.
\vspace{.3cm}

\subsection{The restriction to subspace D}
Let us now restrict the operator $HU_{\,T}$ to the two dimensional
subspace $D$, that is the complex line of $\C^3$ generated by
\[
S=(\,2m_2m_3\,,\,2m_1m_3\,\omega^2\,,\,2m_1m_2\,\omega\,)\in\C^3.
\]
The triangle $S$ corresponds to the vector
\[
\eta=
(2m_2m_3,0,-m_1m_3,-\sqrt{3}m_1m_3,-m_1m_2,\sqrt{3}m_1m_2)
\in\R^6,
\]
and the $\pi/2$ rotated triangle $iS$
corresponds to the vector
\[
\zeta=
(0,2m_2m_3,\sqrt{3}m_1m_3,-m_1m_3,-\sqrt{3}m_1m_2,-m_1m_2)
\in\R^6.
\]
Then we get
\[
A\eta=(\,m_2m_3(u_1,v_1)\,,\,m_1m_3(u_2,v_2)\,,\,m_1m_2(u_3,v_3)\,)
\]
where
\begin{eqnarray*}
z_1=u_1+i\,v_1&=&(-8m_1+10m_2+10m_3)+i\,(6\sqrt{3}(m_3-m_2))\\
z_2=u_2+i\,v_2&=&(4m_1-14m_2+4m_3)+i\,(4\sqrt{3}m_1-2\sqrt{3}m_2-8\sqrt{3}m_3)\\
z_3=u_3+i\,v_3&=&(4m_1+4m_2-14m_3)+i\,(-4\sqrt{3}m_1+8\sqrt{3}m_2+2\sqrt{3}m_3)
\end{eqnarray*}
It is immediately verified that
$\omega z_1=z_3$, $\omega z_3=z_2$ and $\omega z_2=z_1$
which confirms, as expected, that $(z_1, z_2,z_3)$ is a negative
equilateral triangle, and so that $A\eta\in D$.

In the same way we obtain
\[
A\zeta=(\,m_2m_3(u'_1,v'_1)\,,\,m_1m_3(u'_2,v'_2)\,,\,m_1m_2(u'_3,v'_3)\,)
\]
where
\begin{eqnarray*}
z'_1=u'_1+i\,v'_1&=&(6\sqrt{3}(m_3-m_2))+i\,(16m_1-2m_2-2m_3)\\
z'_2=u'_2+i\,v'_2&=&(8\sqrt{3}m_1+2\sqrt{3}m_2-4\sqrt{3}m_3)+i\,(-8m_1+10m_2-8m_3)\\
z'_3=u'_3+i\,v'_3&=&(-8\sqrt{3}m_1+4\sqrt{3}m_2-2\sqrt{3}m_3)+i\,(-8m_1-8m_2+10m_3)
\end{eqnarray*}
and again, it is verified that
$\omega z'_1=z'_3$, $\omega z'_3=z'_2$ y $\omega z'_2=z'_1$,
showing as expected the invariance of $D$ under
multiplication by $A$.

\subsection{Signature of the quadratic form}

We compute now the $2\times2$ matrix $A_D$
which is the associated matrix, for the basis $\set{\eta,\zeta}$ of $D$,
of the symmetric bilinear form given by $\inner{Au}{v}$.
It is the matrix
\[
A_D=
\left(
\begin{array}{cc}
\inner{\eta}{A\eta} & \inner{\eta}{A\zeta}  \\
\inner{\zeta}{A\eta} & \inner{\zeta}{A\zeta}
\end{array}
\right)=\left(
\begin{array}{cc}
 a & b  \\
 c & d
\end{array}
\right)
\]
where the inner products are with respect to the mass inner product,
and we obtain
\begin{align*}
a&=&2m_1m_2^2m_3^2u_1-m_1^2m_2m_3^2u_2-\sqrt{3}m_1^2m_2m_3^2v_2
-m_1^2m_2^2m_3u_3+\sqrt{3}m_1^2m_2^2m_3v_3\\
b&=&2m_1m_2^2m_3^2u'_1-m_1^2m_2m_3^2u'_2-\sqrt{3}m_1^2m_2m_3^2v'_2
-m_1^2m_2^2m_3u'_3+\sqrt{3}m_1^2m_2^2m_3v'_3\\
c&=&2m_1m_2^2m_3^2v_1+\sqrt{3}m_1^2m_2m_3^2u_2-m_1^2m_2m_3^2v_2
-\sqrt{3}m_1^2m_2^2m_3u_3-m_1^2m_2^2m_3v_3\\
d&=&2m_1m_2^2m_3^2v'_1+\sqrt{3}m_1^2m_2m_3^2u'_2-m_1^2m_2m_3^2v'_2
-\sqrt{3}m_1^2m_2^2m_3u'_3-m_1^2m_2^2m_3v'_3
\end{align*}

The trace of that matrix is
\begin{eqnarray*}
a+d&=&m_1m_2^2m_3^2(2u_1+2v'_1)+m_1^2m_2m_3^2(-u_2+\sqrt{3}u'_2-\sqrt{3}v_2-v'_2)\\
&&+m_1^2m_2^2m_3(-u_3-\sqrt{3}u'_3+\sqrt{3}v_3-v'_3)
\end{eqnarray*}
where
\begin{eqnarray*}
2u_1+2v'_1&=&16(m_1+m_2+m_3)\\
-u_2+\sqrt{3}u'_2-\sqrt{3}v_2-v'_2&=&
16(m_1+m_2+m_3)\\
-u_3-\sqrt{3}u'_3+\sqrt{3}v_3-v'_3&=&
16(m_1+m_2+m_3)
\end{eqnarray*}

Therefore we conclude that
\[
\tr A_D=\,
16\,m_1m_2m_3\,(m_1+m_2+m_3)\,(m_2m_3+m_1m_3+m_1m_2)>0.
\]

We now continue with the computation of the determinant $ad-bc$.

Factoring, we obtain:
\[
\begin{array}{cl}
a= & m_1m_2m_3\;
(\;m_2m_3(2u_1)+m_1m_3(-u_2-\sqrt{3}v_2)+m_1m_2(-u_3+\sqrt{3}v_3)\;)
\\
d= & m_1m_2m_3\;
(\;m_2m_3(2v'_1)+m_1m_3(\sqrt{3}u'_2-v'_2)+m_1m_2(-\sqrt{3}u'_3-v'_3)\;)
\end{array}
\]
where
\[
\begin{array}{ccccccc}
2u_1 &=& -u_2-\sqrt{3}v_2 &=& -u_3+\sqrt{3}v_3 &=& -16m_1+20m_2+20m_3 \\
2v'_1 &=& \sqrt{3}u'_2-v'_2 &=& -\sqrt{3}u'_3-v'_3 &=& 32m_1-4m_2-4m_3
\end{array}
\]
and therefore,
\[
\begin{array}{cl}
a= & m_1m_2m_3\,(m_2m_3+m_1m_3+m_1m_2)\,(-16m_1+20m_2+20m_3)\\
d= & m_1m_2m_3\,(m_2m_3+m_1m_3+m_1m_2)\,(32m_1-4m_2-4m_3)
\end{array}
\]
In the same way we obtain
\[
\begin{array}{cl}
b= & m_1m_2m_3\;
(\;m_2m_3(2u'_1)+m_1m_3(-u'_2-\sqrt{3}v'_2)+m_1m_2(-u'_3+\sqrt{3}v'_3)\;)
\\
c= & m_1m_2m_3\;
(\;m_2m_3(2v_1)+m_1m_3(\sqrt{3}u_2-v_2)+m_1m_2(-\sqrt{3}u_3-v_3)\;)
\end{array}
\]
where
\[
\begin{array}{ccccccc}
2u'_1 &=& -u'_2-\sqrt{3}v'_2 &=& -u'_3+\sqrt{3}v'_3 &=& 12\sqrt{3}(m_3-m_2) \\
2v_1 &=& \sqrt{3}u_2-v_2 &=& -\sqrt{3}u_3-v_3 &=& 12\sqrt{3}(m_3-m_2)
\end{array}
\]
and therefore,
\[
b=c=m_1m_2m_3\,(m_2m_3+m_1m_3+m_1m_2)\,12\sqrt{3}(m_3-m_2)
\]
Defining the positive constant $\nu$,
\[
\nu=m_1m_2m_3\,(m_2m_3+m_1m_3+m_1m_2)
\]
we can write
\[
\begin{array}{cl}
ad=& \nu^2\,(-16m_1+20m_2+20m_3)(32m_1-4m_2-4m_3) \\
&\\
=&\nu^2\,(-512m_1^2-80m_2^2-80m_3^2
+704m_1m_2+704m_1m_3-160m_2m_3)\\
&\\
bc=& \nu^2\,432 (m_3-m_2)^2\\
&\\
=&\nu^2\,(432m_3^2-864m_2m_3+432m_2^2)
\end{array}
\]
so we conclude that
\[
\nu^{-2}\det A_D=-512\,(m_1^2+m_2^2+m_3^2)+704\,(m_2m_3+m_1m_3+m_1m_2)
\]
hence that $\det A_D>0$ if and only if
\[
\lambda=
\frac{m_2m_3+m_1m_3+m_1m_2}{m_1^2+m_2^2+m_3^2}>\frac{512}{704}=\frac{8}{11}\,.
\]
Remembering that the Gascheau constant is
\[
\mu=\frac{(m_1+m_2+m_3)^2}{m_1m_2+m_2m_3+m_1m_3}
\]
we can say that $\det A_D>0$ if and only if
$\mu<27/8$, which proves Theorem \ref{thm:LagrangeisSND.iff.mu<27/8}.

\end{document}